\newtheorem{thm}{Theorem}[section]
\newtheorem{defn}[thm]{Definition}
\newtheorem{ex}[thm]{Example}
\newtheorem{prop}[thm]{Proposition}
\newtheorem{cor}[thm]{Corollary}
\newtheorem{lem}[thm]{Lemma}
\newtheorem{rem}[thm]{Remark}
\newcommand{\mf}[1]{{\mathfrak{#1}}}
\newcommand{\mr}[1]{{\mathrm{#1}}}
\newcommand{\bb}[1]{{\mathbb{#1}}}
\newcommand{\mca}[1]{{\mathcal{#1}}}
\newcommand{\Hom}{\mr{Hom}}
\newcommand{\Z}{\bb{Z}}
\newcommand{\C}{\bb{C}}
\newcommand{\OO}{\mca{O}}
\newcommand{\R}{\bb{R}}
\newcommand{\Q}{\bb{Q}}
\newcommand{\HH}{\bb{H}}
\newenvironment{NB}{
\color{red}{\bf NB}. \footnotesize
}{}
\title{On higher rank Donaldson-Thomas invariants}
\author{Kentaro Nagao\\
RIMS, Kyoto University\\
  Kyoto 606-8502, Japan
}
\begin{document}

\maketitle
\begin{abstract}
We study higher rank Donaldson-Thomas invariants of a Calabi-Yau $3$-fold using Joyce-Song's wall-crossing formula.
We construct quivers whose counting invariants coincide with the Donaldson-Thomas invariants. 
As a corollary, we prove the integrality and a certain symmetry for the higher rank invariants.
\end{abstract}

\setcounter{tocdepth}{2}
\tableofcontents

\section*{Introduction}
The aim of this paper is to study the higher rank Donaldson-Thomas (DT) invariants 
for a Calabi-Yau $3$-fold $X$ using Joyce-Song's wall-crossing formula.

The DT invariant, introduced as a holomorphic analogue of the Casson invariant in \cite{thomas-dt}, is a counting invariant of stable coherent sheaves on $X$.
It is conjectured in \cite{mnop} that the DT theory is equivalent to the Gromov-Witten theory after being normalized by the zero dimensional DT invariants.
It is shown in \cite{behrend-fantechi,levine_pandharipande,li_0dimDT} that the generating function of the zero dimensional DT invariants is given by the MacMahon function:
\[
M(-q)^{\chi(X)}:=\prod_{k\geq 1}(1-(-q)^k)^{-k\chi(X)}.
\]

Recently, generalized DT invariants and their wall-crossing formula has been developed in \cite{ks} and \cite{joyce-song}.
The original DT invariant is given as the weighted Euler characteristic of the moduli scheme (\cite{behrend-dt}), which is an integer by definition.
In the generalized DT theory, we face difficulties to define an invariant since the moduli is not a scheme but a stack.
The idea in \cite{ks} and \cite{joyce-song}, which had appeared in \cite{joyce-1,joyce-2,joyce-3,joyce-4} already, was to use the {\it motivic Hall algebra} to define an invariant. 
Although the invariant is a rational number a priori, it is expected to be an integer (\cite[Conjecture 6]{ks}, \cite[Conjecture 6.13]{joyce-song}). 

In \S \ref{sec_06},
we study the higher rank zero dimensional DT invariant $\Omega(r,n)$ for integers $r$ and $n$, which is a counting invariant of stable coherent sheaves $E$ on $X$ such that 
\[
\mr{ch}(E):=(\mr{ch}_3(E),\mr{ch}_2(E),\mr{ch}_1(E),\mr{ch}_0(E))
=(r,0,0,-n)
\in \bigoplus_{i=0}^3H^{2i}(X,\Z).
\]
The integer $r$ is called the {\it rank}. 
Note that the original DT invariant is the rank one invariant. 
Such invariants have been studied in \cite[\S 6.5]{ks} as BPS invariants for D$0$-D$6$ bound states.
As pointed out there, we can compute them by the wall-crossing formula (\cite[Theorem 7,8]{ks}, \cite[Equation (79)]{joyce-song}) once we are given the data of the rank one invariants.
In recent the papers \cite{toda_rank2} and \cite{stoppa_D0D6}, the authors studied lower rank invariants by analyzing the wall-crossing formula directly. 
For higher rank invariants, the wall-crossing formula is complicated and it seems difficult to extend their arguments.
The main idea in this paper comes from the following observation:
\begin{quote}
the DT type invariants for one stability condition are determined by the initial data (the DT type invariants for the other stability condition) and the coefficients in the wall-crossing formula.
\end{quote}
Actually, we find a quiver without relations whose DT type theory has the same initial data and the same coefficients in the wall-crossing formula as D$0$-D$6$ state counting.
Then, the D$0$-D$6$ invariants coincide with the DT type invariants for the quiver.
In particular, we get the integrality for the D$0$-D$6$ invariants since the DT type invariants for a quiver without relations are known to be an integer (\cite[Theorem 7.28]{joyce-song}).

\smallskip

Here we give a brief review of the D$0$-D$6$ invariants following \cite{toda_rank2}.
First, we take a heart $\mca{A}_X$ of a bounded t-structure of the category of D$0$-D$6$ bound states (\cite[\S 2.1]{toda_rank2}).
We denote by $\Gamma_X:=H^6(X,\Z)\oplus H^0(X,\Z)$ its numerical Grothendieck group and by $\langle-,-\rangle_X\colon \Gamma_X\times\Gamma_X\to \Z$ the Euler pairing.
There are two stability conditions $Z^\pm_X\colon \Gamma_X\to\C$ and the DT type invariants $\Omega^\pm_X(r,n)$ ($(r,n)\in \Gamma_X$) are defined for each stability condition.
The invariant $\Omega^-_X(r,n)$ is easy to compute.
Once we are given the data of $\Omega^-_X(r,n)$, we can compute $\Omega^+_X(r,n)$ by the wall-crossing formula. 
Note that the wall-crossing formula depends only on the data of $\Gamma_X$, $\langle-,-\rangle_X$ and $Z_X^\pm$, not on the category $\mca{A}_X$.

In \S \ref{subsec_quiver}, we define a quiver $Q=Q_{\chi,N}$ ($\chi=\chi(X)$) for a positive integer $N$.
We denote by $\Gamma_Q$ the Grothendieck group of the category $\mr{mod}Q$ of finite dimensional $Q$-modules and by $\langle-,-\rangle_Q\colon \Gamma_Q\times\Gamma_Q\to \Z$ the Euler pairing.
Then, 
\begin{itemize}
\item there is a group homomorphism $\pi\colon\Gamma_Q\to\Gamma_X$ such that 
\begin{equation}\label{eq_1}
\langle-,-\rangle_Q=\langle\pi(-),\pi(-)\rangle_X,
\end{equation} 
\item the maps 
\begin{equation}\label{eq_2}
Z_Q^\pm:=Z_X^\pm\circ \pi
\end{equation}
give stability conditions on $\mr{mod}Q$ and the DT type invariants $\Omega^\pm_Q(\beta)$ ($\beta\in \Gamma_Q$) are defined,
\item the invariants $\Omega^-_Q(\beta)$ are easy to compute and if $n\leq N$ then we have
\begin{equation}\label{eq_3}
\Omega^-_X(r,n)=\sum_{\beta\in \pi^{-1}(r,n)}\Omega^-_Q(\beta),
\end{equation}
\item we can compute $\Omega^+_Q(\beta)$ by the wall-crossing formula which depends only on the data of $\Gamma_Q$, $\langle-,-\rangle_Q$ and $Z_Q^\pm$.
\end{itemize}
Thanks to Equation \eqref{eq_1} and \eqref{eq_2}, $\Omega^\pm_X(r,n)$ and $\Omega^\pm_Q(\beta)$ satisfy the same wall-crossing formula. 
Substituting Equation \eqref{eq_3} for the wall-crossing formula, we get 
\begin{equation}\label{eq_4}
\Omega^+_X(r,n)=\sum_{\beta\in \pi^{-1}(r,n)}\Omega^+_Q(\beta)
\end{equation}
for $n\leq N$.
Hence, the integrality of $\Omega^+_X(r,n)$ follows from the integrality of $\Omega^+_Q(\beta)$, which is shown in \cite[Theorem 7.28]{joyce-song}.

Another application of Equation \eqref{eq_4} is the following symmetry:
\[
\Omega^+_X(r,n)=\Omega^+_X(n-r,n).
\]
This is a consequence of the reflection functor in the sense of \cite{BGP_reflection}.
In \cite[\S 1.5]{stoppa_D0D6}, the author proved the symmetry using the correspondence with GW invariants (\cite{GPS_tropical_vertex}). Yukinobu Toda provided another proof, which is more direct.

\smallskip

In \S \ref{sec_026}, we study D$0$-D$2$-D$6$ invariants\footnote{In recent paper \cite{CDP_rank2_ADHM}, the authors studied rank two D$0$-D$2$-D$6$ invariants for local curves using the ADHM description.} for small crepant resolutions by the same method.

\smallskip

All the integrality results in this paper follow from more general result: 
if the integrality for one generic stability condition is true, 
then so is the one for another generic stability condition (``{\it relative integrality}''). 
The relative integrality for Kontsevich-Soibelman's wall-crossing formula was proved by Markus Reineke (\cite{reineke_integrality}). 
Martijn Kool proved the relative integrality for Joyce-Song invariants (\cite{joyce_kool})\footnote{His argument is quite similar to ours: he uses the wall-crossing formula and the integrality for quivers without relations. He proved the relative integrality (and absolute integrality for rank two invariants) in the summer of 2009 (with Dominic Joyce), long before the idea of this paper occurred to the author!}.

\subsection*{Acknowledgement}
The author is grateful to Martijn Kool for explaining his result on the relative integrality. 
He also thanks Duiliu-Emanuel Diaconescu, Dominic Joyce, Jacopo Stoppa and Yukinobu Toda for helpful comments.

This paper was written while the author has been visiting the University of Oxford.
He is grateful to Dominic Joyce for the invitation and to the Mathematical Institute for hospitality.

The author is supported by JSPS Fellowships for Young Scientists (No.\ 19-2672).

\begin{NB}
Although we will use Joyce-Song's formulation in the body of the paper, in the introduction we give an explanation in terms of Kontsevich-Soibelman's formulation, which will help the readers to grasp the outline of the argument.

First, we consider the two dimensional torus 
\[
\mr{QT}^{\mr{D}0\text{-}\mr{D}6}_{\mr{sc}}:=\C[x^\pm,y^\pm]
\]
and its automorphism $T_{r,n}$ given by 
\[
T_{r,n}(x)=(1-x^ry^n)^{-n}x,\quad 
T_{r,n}(y)=(1-x^ry^r)^{n}y.
\]
Due to the factorization property, the invariants $\Omega(r,n)$ are characterized by 
\[
\prod_{n\geq 1}T_{n,0}^{-\chi}\cdot T_{0,1}\cdot \prod_{n\geq 1}T_{n,0}^{\chi}
=
\prod^{\longrightarrow}_{n\geq 0,r\geq 1}T_{r,n}^{\Omega(r,n)}
\]
where $\chi:=\chi(X)$ (\cite[Equation (1.3)]{stoppa_D0D6}).

In \S \ref{}, we introduce the quiver $Q_{\chi,N}$ for a positive integer $N$.
Let 
\[
\mr{QT}^{Q_{\chi,N}}_{\mr{sc}}:=
\C[X^\pm,Y_{j,k}^\pm (1\leq j\leq \chi,1\leq k\leq r)]
\]
be the associated torus.
\end{NB}

\section{D$0$-D$6$ states}\label{sec_06}
\subsection{Quivers associated to D$0$-D$6$ states}
\subsubsection{}\label{subsec_quiver}
Let $\chi$ be an integer and $N$ be a positive integer. 
We put $I=I_{\chi,N}:=\{(j,k)\mid 1\leq j\leq |\chi|,1\leq k\leq N\}$ and $\bar{I}=\bar{I}_{\chi,N}:=\{0\}\sqcup I$.
Let $Q=Q_{\chi,N}$ be the quiver whose set of vertices is $\bar{I}$ and whose set of arrows is 
\[
\{a_{j,k,p}\mid (j,k)\in I,1\leq p\leq k\}
\]
if $\chi <0$ and 
\[
\{a_{j,k,p}\mid (j,k)\in I,1\leq p\leq k\}
\sqcup
\{b_{j,k}\mid (j,k)\in I\}
\]
if $\chi >0$ where $a_{j,k,p}$ is an arrow from the vertex $0$ to the vertex $(j,k)$ and $b_{j,k}$ is a loop from the vertex $(j,k)$ to itself.
\begin{ex}
The quiver for $(\chi,N)=(2,3)$ and $=(-2,3)$ is shown in Figure \ref{fig_1} and Figure \ref{fig_1.5} respectively.
\end{ex}
\begin{figure}[htbp]
  \centering
  \input{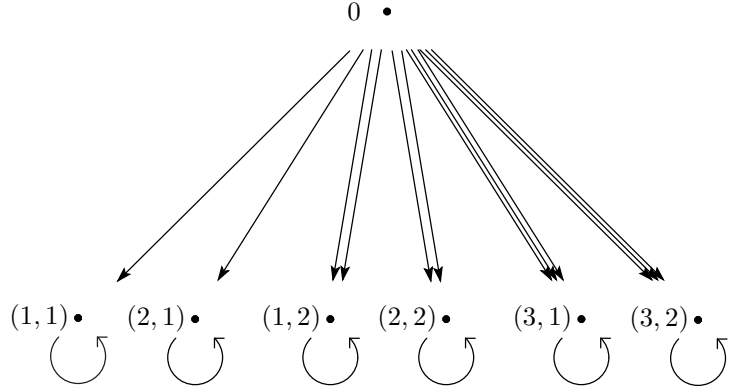}
  \caption{The quiver $Q_{2,3}$}
\label{fig_1}
\end{figure}
\begin{figure}[htbp]
  \centering
  \input{quiver2.5.tpc}
  \caption{The quiver $Q_{-2,3}$}
\label{fig_1.5}
\end{figure}

\subsubsection{}
Let ${\mr{mod}Q}$ be the category of finite dimensional $Q$-modules and 
\[
\Gamma_Q:=K({\mr{mod}(Q)})\simeq \Z^{\bar{I}}=\Z\oplus \Z^I
\]
be the Grothendieck group. 
Let $s_0$ and $s_{j,k}$ be the simple modules for the vertices and we put
\[
e_0:=[s_0],\quad e_{j,k}=[s_{j,k}]\in \Gamma_Q.
\]

We denote by $\HH\subset\C$ the upper half plane,
\[
\HH:=\{z\in\C\mid \mr{Im}(z)>0\}.
\]
We fix four complex numbers $\theta^\pm_r$, $\theta^\pm_n\in\HH$ such that
\[
\mr{arg}\theta^+_r>\mr{arg}\theta^+_n,
\quad
\mr{arg}\theta^-_r<\mr{arg}\theta^-_n.
\]
Let 
\[
Z_Q^\pm\colon\Gamma_Q\to \C
\]
be the stability conditions on $\mr{mod}Q$ given by 
\[
Z_Q^\pm(e_0)=\theta^\pm_r,\quad 
Z_Q^\pm(e_{j,k})=k\cdot \theta^\pm_n\ (\forall (j,k)\in I).
\]
It is easy to classify $Z_Q^-$-semistable modules:
\begin{lem}\label{lem_12}
A $Q$-module $V$ is $Z_Q^-$-semistable if and only if 
$V_0=0$ or $V$ is supported on the vertex $0$.
\end{lem}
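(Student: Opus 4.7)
The plan is to argue both implications directly from the definition of $Z_Q^-$-semistability, exploiting that vertex $0$ is a ``source'' of the quiver $Q$ (no arrow ends there) and that the only loops of $Q$ sit at the vertices $(j,k)\in I$.

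For the ``if'' direction I dispatch the two cases. If $V$ is supported at the vertex $0$, then since there is no loop at $0$ the module $V$ is isomorphic to $s_0^{\oplus \dim V_0}$; every non-zero submodule is of the form $s_0^{\oplus m}$ and has phase $\arg\theta_r^-$, matching the phase of $V$. If instead $V_0=0$, then for any submodule $U\subset V$ we automatically have $U_0=0$, so $Z_Q^-(U)=\bigl(\sum_{(j,k)\in I}k\dim U_{j,k}\bigr)\theta_n^-$ is a non-negative real multiple of $\theta_n^-$ and therefore has the same phase as $Z_Q^-(V)$. In both cases $V$ is $Z_Q^-$-semistable.

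For the converse, I assume $V_0\neq 0$ and that some $V_{j_0,k_0}$ is non-zero, and exhibit a destabilizing submodule. Define $U$ by $U_0:=0$ and $U_{j,k}:=V_{j,k}$ for all $(j,k)\in I$. Since the arrows $a_{j,k,p}$ all originate at $0$ (so $U_0=0$ imposes no outgoing condition) and the loops $b_{j,k}$ (present when $\chi>0$) preserve $U_{j,k}=V_{j,k}$, the subspace $U$ is stable under all arrows and hence is a non-zero proper submodule of $V$. One computes $\arg Z_Q^-(U)=\arg\theta_n^-$, whereas $Z_Q^-(V)$ is a strictly positive real combination of the two distinct vectors $\theta_r^-$ and $\theta_n^-$ in $\HH$; the hypothesis $\arg\theta_r^-<\arg\theta_n^-$ then forces $\arg Z_Q^-(V)<\arg\theta_n^-=\arg Z_Q^-(U)$. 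Hence $U$ destabilizes $V$ and $V$ fails to be $Z_Q^-$-semistable.

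The only conceivable pitfall is verifying carefully that $U$ really is a $Q$-submodule and keeping the direction of the phase inequality in the definition of semistability consistent; both are immediate from the structure of $Q$ and the standing assumption $\arg\theta_r^-<\arg\theta_n^-$, so I do not expect a genuine obstacle.
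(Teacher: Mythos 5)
Your argument is correct and complete: the paper states this lemma without proof (prefacing it only with ``It is easy to classify $Z_Q^-$-semistable modules''), and your direct verification --- using that $0$ is a source so that $U$ with $U_0=0$, $U_{j,k}=V_{j,k}$ is always a submodule, and that $\arg\theta^-_r<\arg\theta^-_n$ forces $\arg Z_Q^-(V)<\arg Z_Q^-(U)$ when both $V_0$ and some $V_{j,k}$ are non-zero --- is exactly the intended elementary argument. No gaps.
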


\subsubsection{}
For $\beta\in \Gamma_Q$, let $\mca{O}bj_Q^\beta$ be the moduli stack of all $Q$-modules with dimension vectors $\beta$ and\[
\mca{M}_Q^\beta(Z_Q^\pm)\subset \mca{O}bj_Q^\beta
\]
be the substack of $Z_Q^\pm$-semistable modules.

Let $(\mca{H}(Q),*)$ be the Hall algebra associated to the Abelian category of $Q$-modules (\cite{joyce-2})\footnote{The algebra $\mca{H}(Q)$ is denoted by $\underline{\mr{SF}}(\mca{O}bj_Q)$ in Joyce's paper. An element in $\underline{\mr{SF}}(\mf{Obj}_Q)$ is called a {\it stack function} on $\mca{O}bj_Q$}. 
We define elements
\[
\delta_Q^{\beta}(Z_Q^\pm):=\bigl[\mca{M}_Q^{\beta}(Z_Q^\pm)\subset \mca{O}bj_Q^{\beta}\bigr]\in \mca{H}(Q)
\]
and
\[
\varepsilon_Q^{\beta}(Z_Q^\pm):=
\sum_{
\begin{subarray}{c}
l \geq 0,\ \beta_1+\cdot+\beta_l=\beta,\\
Z^-_Q(\beta_i)\in \R_{>0}\cdots Z^-_Q(\beta) \text{ for all $i$}.
\end{subarray}
}
\frac{(-1)^{l-1}}{l}\delta_Q^{\beta_1}(Z_Q^\pm)*\cdots*\delta_Q^{\beta_l}(Z_Q^\pm)
\]
The important fact \cite[Theorem 8.7]{joyce-3} is that $\varepsilon_Q^{\beta}(Z_Q^\pm)$ is supported on ``virtual indecomposable objects'', and we can define the weighted Euler characteristic
\[
\bar{\mr{DT}}_Q^\pm(\beta):=\chi(\varepsilon_Q^{\beta}(Z_Q^\pm),-\nu)\in\Q
\]
where $\nu$ is the Behrend function. 
We define the BPS invariant $\Omega_Q^\pm(\beta)\in \Q$ by the following equation:
\[
\bar{\mr{DT}}_Q^\pm(\beta)=
\sum_{m\geq 1,m|\beta}\Omega_Q^\pm(\beta/m)/m^2.
\]
Since the stability conditions $Z^\pm_Q$ are {\it generic} in the sense of \cite[Theorem 7.28]{joyce-song}, we can apply the integrality theorem:
\begin{thm}[\protect{\cite[Theorem 7.28]{joyce-song}}]\label{thm_js_integrality}
$\Omega_Q^\pm(\beta)\in \Z$.
\end{thm}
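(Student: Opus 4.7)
The essential point to exploit is that $Q=Q_{\chi,N}$ has no relations, so the moduli stack $\mca{M}_Q^\beta$ is the quotient $[R(\beta)/GL(\beta)]$ of a smooth affine representation variety
$R(\beta)=\bigoplus_{a\in Q_1}\Hom(\C^{\beta_{s(a)}},\C^{\beta_{t(a)}})$
by a reductive group. In particular $\mca{M}_Q^\beta$ is a smooth Artin stack, and I would leverage this throughout.

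First I would verify genericity of $Z_Q^\pm$. Because $\mr{arg}\,\theta_r^\pm\neq \mr{arg}\,\theta_n^\pm$, the central charge factors as $Z_Q^\pm(\beta)=\beta_0\theta_r^\pm+\bigl(\sum_{j,k} k\beta_{j,k}\bigr)\theta_n^\pm$, so it is determined by the image of $\beta$ under the projection $\Gamma_Q\to\Z^2$, $\beta\mapsto(\beta_0,\sum k\beta_{j,k})$. Two classes have proportional central charges iff their images in $\Z^2$ are proportional, and from this one checks that any $Z_Q^\pm$-semistable module is an iterated extension of stables whose dimension vectors all project to the same ray in $\Z^2$; this is exactly the genericity hypothesis required by Joyce-Song's integrality theorem.

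Second, I would use smoothness to compute the Behrend-weighted invariant. By Behrend's calculation, the Behrend function of a smooth Artin stack of dimension $d$ is identically $(-1)^d$, so $\bar{\mr{DT}}_Q^\pm(\beta)$ reduces to a signed topological Euler characteristic of $\varepsilon_Q^\beta(Z_Q^\pm)$. Joyce's theorem that $\varepsilon^\beta$ is supported on ``virtual indecomposables'', combined with the genericity above, lets one rewrite $\bar{\mr{DT}}_Q^\pm(\beta)$ in terms of Euler characteristics of coarse moduli spaces of stable representations of primitive dimension vectors. These moduli spaces are smooth (as GIT quotients of smooth varieties by free $PGL$-actions under the genericity assumption), so their Euler characteristics are integers, and hence so is $\bar{\mr{DT}}_Q^\pm(\beta)$.

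Finally, $\Omega_Q^\pm(\beta)$ is recovered from $\bar{\mr{DT}}_Q^\pm(\beta)$ by M\"obius inversion of the multiple-cover formula, and the main obstacle I anticipate is that the factors $1/m^2$ do not visibly produce an integer. The cleanest route, due to Reineke, is to refine the above to a count over $\bb{F}_q$, show that the Hall-algebra identity underlying the inversion upgrades to an identity of polynomials in $q$ with integer coefficients, and then specialize at $q=1$. The non-trivial input here is the no-pole property of the logarithm of the semistable generating series in the quantum torus of $Q$; this is where I would expect to concentrate most of the work.
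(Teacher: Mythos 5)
The paper offers no proof of this statement beyond the sentence preceding it: since $Q$ is a quiver without relations and $\mr{arg}\,\theta^\pm_r\neq\mr{arg}\,\theta^\pm_n$, the stability conditions $Z_Q^\pm$ are generic in the sense of Joyce--Song, and the theorem is then quoted verbatim from \cite[Theorem 7.28]{joyce-song}. Your genericity check --- that $Z_Q^\pm(\beta)$ depends only on $\bigl(\beta_0,\sum k\beta_{j,k}\bigr)$, so equal phases force proportional images in $\Z^2$ and hence vanishing of the antisymmetric pairing $\langle-,-\rangle_Q$ --- is exactly the verification the paper leaves implicit, and it is correct. Up to that point you are doing the same (small) amount of work as the paper, only more explicitly.

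Where you go further and attempt to reprove the cited theorem, there is a genuine error. You assert that $\bar{\mr{DT}}_Q^\pm(\beta)$ reduces to signed Euler characteristics of smooth coarse moduli of stables of primitive classes ``and hence is an integer.'' This is false: for non-primitive $\beta$ the element $\varepsilon_Q^\beta$ carries the coefficients $(-1)^{l-1}/l$ and the strictly semistable strata contribute genuinely fractional amounts. The paper's own Lemma \ref{lem_app} is a counterexample: for the one-loop quiver (no relations, generic stability) one has $\bar{\mr{DT}}_{Q^\circ}(n)=-1/n^2\notin\Z$. It is $\Omega$, not $\bar{\mr{DT}}$, that is an integer, and the passage between them is not a formal M\"obius inversion of integers --- so your intermediate claim is both wrong and would not suffice even if true. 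Your final paragraph correctly locates the real content (Reineke-type counting over $\bb{F}_q$ and the no-pole property of the logarithm of the semistable generating series, which is indeed the engine behind \cite[Theorem 7.28]{joyce-song}), but you only name that step rather than carry it out, so the integrality of $\Omega_Q^\pm(\beta)$ is not actually established by your argument. For the purposes of this paper the citation plus the genericity check is all that is required; a self-contained proof would have to supply the no-pole argument.
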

\begin{lem}\label{lem_Q-}
\[
\Omega_Q^-(\beta)=
\begin{cases}
1, & \beta=e_0,\\
-\mr{sgn}(\chi), & \beta=n\cdot e_{j,k}\ (n>0),\\
0, & \text{otherwise}.
\end{cases}
\]
\end{lem}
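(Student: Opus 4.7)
The plan is to apply Lemma \ref{lem_12} to restrict to very few classes $\beta$ for which semistables exist, then compute $\Omega_Q^-$ in each remaining case.

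By Lemma \ref{lem_12}, any $Z_Q^-$-semistable module $V$ either has $V_0=0$ or is supported entirely at the vertex $0$. In particular, if $\beta$ has both $\beta_0>0$ and $\beta_{j,k}>0$ for some $(j,k)\in I$, then $\mca{M}_Q^{\beta}(Z_Q^-)=\emptyset$, so $\delta_Q^{\beta}(Z_Q^-)=0$ and $\Omega_Q^-(\beta)=0$ immediately. For $\beta\in\Z_{\geq 0}^{I}$: since $Q$ has no arrows between distinct vertices of $I$, every module $V$ with $V_0=0$ splits as $V=\bigoplus_{(j,k)} V_{j,k}$ with each $V_{j,k}$ concentrated at $(j,k)$. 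If $\beta$ is supported at more than one $I$-vertex, then every semistable of class $\beta$ is strictly decomposable, so by Joyce's theorem on the virtual-indecomposable support of $\varepsilon$ (\cite[Theorem 8.7]{joyce-3}), $\Omega_Q^-(\beta)=0$.

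This reduces the problem to $\beta=n\cdot e_0$ and $\beta=n\cdot e_{j,k}$. In each case the Hall-algebra computation of $\varepsilon_Q^{\beta}(Z_Q^-)$ is carried out inside the subcategory of semistables concentrated at a single vertex, which is equivalent to $\mr{mod}\,L$ for a one-vertex quiver $L$: no loops at the vertex $0$, and at $(j,k)$ either no loop ($\chi<0$) or one loop $b_{j,k}$ ($\chi>0$). For $\beta=e_0$ the moduli $\mca{M}_Q^{e_0}(Z_Q^-)$ is the single point $\{s_0\}$, so $\varepsilon_Q^{e_0}=\delta_Q^{e_0}$ and one reads off $\Omega_Q^-(e_0)=1$. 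For $\beta=n\cdot e_0$ with $n\geq 2$, the only indecomposable in the subcategory of modules supported at vertex $0$ is $s_0$, so $\varepsilon_Q^{n e_0}(Z_Q^-)$ vanishes on indecomposable support and $\Omega_Q^-(n e_0)=0$.

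The remaining case $\beta=n\cdot e_{j,k}$ is computed by the analogous Hall-algebra manipulation inside the one-vertex subcategory, together with the Behrend function on the full moduli stack $\mca{O}bj_Q^{n e_{j,k}}$. When $\chi>0$, the indecomposable $n$-dimensional representations of the Jordan quiver form a $\C$-family parameterised by the eigenvalue of $b_{j,k}$, and a Behrend-weighted Euler-characteristic computation yields $\Omega_Q^-(n e_{j,k})=-1$ uniformly in $n$; when $\chi<0$, the analogous computation produces the value $+1$. This last step — verifying the uniform-in-$n$ value $-\mr{sgn}(\chi)$ and tracking the precise sign contribution of the ambient Behrend function — is the main technical obstacle and the heart of the proof.
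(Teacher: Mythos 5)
Your reduction via Lemma \ref{lem_12} to dimension vectors supported at a single vertex is sound and matches the paper's route (its proof cites Lemma \ref{lem_12}, \cite[Equation (90)]{joyce-song} and Lemma \ref{lem_app}). But the step you explicitly defer as ``the main technical obstacle and the heart of the proof'' --- the value of $\Omega_Q^-(n\cdot e_{j,k})$ --- is precisely the content of the paper's Lemma \ref{lem_app}, and the answer there contradicts the one you assert. For $\beta=n\cdot e_{j,k}$ with $\chi>0$ the stack $\mca{O}bj_Q^{\beta}$ is canonically $[\End(\C^n)/GL_n]$, the stack of $n$-dimensional modules over the one-loop quiver $Q^{\circ}$; Lemma \ref{lem_app} computes $\bar{\mr{DT}}_{Q^\circ}(n)=-1/n^2$ from the pair invariants $\mr{NDT}^{n,1}_{Q^\circ}=(-1)^n$ (via $\mr{Hilb}^n(\C)\simeq\C^n$ and \cite[Corollary 7.23]{joyce-song}), whence $\Omega_{Q^\circ}(n)=-1$ for $n=1$ and $0$ for $n\geq 2$. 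So the ``uniform in $n$'' value $-1$ you claim is false for $n\geq 2$: having $\Omega(n)=-1$ for all $n$ would force $\bar{\mr{DT}}(n)=-\sum_{m\mid n}m^{-2}$, e.g.\ $-5/4$ at $n=2$ rather than $-1/4$. Likewise for $\chi<0$ there are no indecomposable representations of the loop-free vertex in dimension $n\geq 2$, so your own strategy of integrating over indecomposables could only return $0$ there, not $+1$.

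In your defence, the displayed statement, read literally, does claim $-\mr{sgn}(\chi)$ for every $n>0$; but that reading is inconsistent with the paper itself: substituted into Lemma \ref{lem_34}(3) it would give $\Omega_X^-(0,n)=-\chi\cdot\#\{k: k\mid n,\ k\leq N\}$ instead of the value $-\chi$ of Lemma \ref{lem_X-}. The middle case must be $\beta=e_{j,k}$ only, with $n\cdot e_{j,k}$, $n\geq 2$, falling under ``otherwise''; your proposal, rather than detecting the typo, promises a computation that would confirm the erroneous reading. Two smaller gaps: when $\beta$ mixes the vertex $0$ with some $(j,k)$ you infer $\Omega_Q^-(\beta)=0$ from $\delta_Q^{\beta}=0$ alone, but $\varepsilon_Q^{\beta}$ also contains products $\delta_Q^{\beta_1}*\cdots*\delta_Q^{\beta_l}$, so you need the (easy) observation that every $\beta_i$ of the same phase as $\beta$ also has both components positive and hence empty semistable moduli; and $\varepsilon_Q^{ne_0}$ does not vanish ($\bar{\mr{DT}}_Q^-(ne_0)=1/n^2$) --- only $\Omega_Q^-(ne_0)$ does for $n\geq 2$, and that again requires a computation of the type in Lemma \ref{lem_app} or \cite[Equation (90)]{joyce-song}, not just the remark that $s_0$ is the only indecomposable.
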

\begin{proof}
The equation follows from Lemma \ref{lem_12}, \cite[Equation (90)]{joyce-song} and Lemma \ref{lem_app}.
\begin{NB}
Let $Q'$ be the quiver given from $Q$ by removing the vertex $0$ and arrows $\{a_{j,k,p}\}$.
We want to compute the DT type invariants for $Q'$. 
Since the Euler pairing for $Q'$ vanishes, the invariants do not depend on stability conditions. 
Take a stability condition such that each simple module has distinct phase, 
then any semistable module is supported on a single vertex.
So the second equation follows from Lemma \ref{lem_12}, \cite[Equation (90)]{joyce-song} and Lemma \ref{lem_app}.
The first and third equations follow directly from the definition and Lemma \ref{lem_12}.
\end{NB} 
\end{proof}

\subsubsection{}
For a nonnegative integer $l$ and a sequence $\vec{\beta}=(\beta_1,\ldots,\beta_l)\in (\Gamma_Q)^l$, the rational number
\[
U(\vec{\beta};Z_Q^-,Z_Q^+)\in \Q
\]
is given by Equation (36) in \cite{joyce-song}.
The following is the Joyce-Song's wall-crossing formula (\cite[Equation (79)]{joyce-song}):
\begin{align}
\notag\bar{\mr{DT}}_Q^+(\beta)
&=
\sum_{
\begin{subarray}{c}
l\geq 1,\vec{\beta}\in (\Gamma_Q)^l,\\
\beta_1+\cdots+\beta_l=\beta.
\end{subarray}
}
\ \sum_{
\begin{subarray}{c}
\text{connected simply-connected}\\
\text{oriented graphs $\Upsilon$ with vertices $1,\ldots,l$,}\\
\text{$\underset{i}{\bullet}\to\underset{j}{\bullet}$ implies $i<j$.}
\end{subarray}
}\\
&\biggl(-\frac{1}{2}\biggr)^{l-1}U(\vec{\beta};Z_Q^-,Z_Q^+)
\sum_{\underset{i}{\bullet}\to\underset{j}{\bullet}\text{ in $\Upsilon$}}(-1)^{\langle\beta_i,\beta_j\rangle}\langle\beta_i,\beta_j\rangle\prod_i\bar{\mr{DT}}_Q^-(\beta_i).\label{eq_5}
\end{align}

\subsection{D$0$-D$6$ state counting}
Let $X$ be a smooth projective Calabi-Yau $3$-fold over $\C$, i.e.
\[
K_X\simeq \OO_X,\quad H^1(X,\OO_X)=0.
\]
Let $\mr{Coh}_0(X)$ be the category of coherent sheaves on $X$ with $0$-dimensional supports.
We denote by $\mca{A}_X$ the Abelian category of triples
\[
(\OO_X^{\oplus r},F,s)
\]
where $r$ is a nonnegative integer, $F\in \mr{Coh}_0(X)$ and $s\colon \OO_X^{\oplus r}\to F$. 
We set $\Gamma_X:=\Z\oplus\Z$ and a group homomorphism
\[
\mr{cl}\colon K(\mca{A}_X)\to \Gamma_X
\]
by 
\[
\mr{cl}(\OO_X^{\oplus r},F,s)
:=
(r,\mr{length}F).
\]
The Euler pairing $\langle-,-\rangle_X\colon \Gamma_X\times \Gamma_X\to \Z$ is given by 
\[
\langle(r,n),(r',n')\rangle_X=rn'-r'n.
\]
Let 
\[
Z_X^\pm\colon\Gamma_X\to \C
\]
be the group homomorphisms given by
\[
Z_X^\pm(1,0)=\theta^\pm_r,\quad 
Z_X^\pm(0,1)=\theta^\pm_n.
\]
They give stability conditions on $\mca{A}_X$ and we can define the invariants 
$\bar{DT}_X^\pm(r,n)$ and $\Omega_X^\pm(r,n)$ in the same way (see \cite{toda_rank2} for the details).
\begin{lem}[\protect{see \cite[Remark 3.10]{toda_rank2}}]\label{lem_X-}
\[
\Omega_X^-(r,n)=
\begin{cases}
1, & (r,n)=(1,0),\\
-\chi(X), & r=0,\, n>0,\\
0, & \text{otherwise}.
\end{cases}
\]
\end{lem}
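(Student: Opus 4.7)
The plan is to imitate the proof of Lemma \ref{lem_Q-}, using the obvious analog of Lemma \ref{lem_12} on the geometric side. First I would classify the $Z_X^-$-semistable objects of $\mca{A}_X$. Since $\mr{arg}(\theta_r^-) < \mr{arg}(\theta_n^-)$, any triple $(\OO_X^{\oplus r}, F, s)$ with $r \geq 1$ and $F \neq 0$ admits the subobject $(0, F, 0)$ of strictly larger phase, hence is unstable. Therefore every $Z_X^-$-semistable object either has $F = 0$ (so is of the form $(\OO_X^{\oplus r}, 0, 0)$) or lies in the image of the embedding $\mr{Coh}_0(X) \hookrightarrow \mca{A}_X$, $F \mapsto (0, F, 0)$. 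In particular there are no semistable objects of class $(r, n)$ with both $r \geq 1$ and $n \geq 1$, which already gives the vanishing in the ``otherwise'' line.

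Next I would dispose of the classes $(r, 0)$. The Euler pairing vanishes on this sub-lattice, and $(\OO_X, 0, 0)$ is a rigid simple object with endomorphism ring $\C$. Running the Joyce-Song $\varepsilon$-construction for the class $(1, 0)$ — effectively the one-vertex, no-arrow quiver — gives $\Omega_X^-(1, 0) = 1$ and $\Omega_X^-(r, 0) = 0$ for $r \geq 2$ by the multiple-cover formula, exactly as for the corresponding lines in Lemma \ref{lem_Q-}.

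For the remaining class $(0, n)$ with $n \geq 1$, the semistable objects are precisely $0$-dimensional sheaves on $X$ of length $n$, so $\bar{\mr{DT}}^-_X(0, n)$ coincides with the classical $0$-dimensional DT invariant of $X$. By \cite{behrend-fantechi, levine_pandharipande, li_0dimDT} the generating function equals $M(-q)^{\chi(X)}$, and inverting the multiple-cover formula then yields $\Omega_X^-(0, n) = -\chi(X)$. This is the content of the appendix lemma \ref{lem_app} already invoked in the proof of Lemma \ref{lem_Q-}, applied on the geometric side rather than to the loop-quiver model; it is also recorded in \cite[Remark 3.10]{toda_rank2}.

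The main obstacle is this last step: the number-theoretic inversion of the multiple-cover formula against the MacMahon generating function to extract the constant BPS number $-\chi(X)$. Once that manipulation is packaged in \ref{lem_app}, the three cases assemble into the stated formula and the vanishing cases follow immediately from the classification in Step 1.
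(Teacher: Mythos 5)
The paper does not actually prove Lemma \ref{lem_X-}: it is stated as a quotation of \cite[Remark 3.10]{toda_rank2} and used as a black box, so there is no internal proof to compare against. Your reconstruction is essentially the right argument and parallels the proof of Lemma \ref{lem_Q-} as you intend: the classification of $Z_X^-$-semistable triples (either $F=0$ or $r=0$) is the correct analogue of Lemma \ref{lem_12}, the vanishing for $r,n\geq 1$ and the values on the ray $(r,0)$ follow as you say, and the value $-\chi(X)$ on $(0,n)$ is the standard computation for zero-dimensional sheaves. One sentence in your third step is inaccurate, though: $\bar{\mr{DT}}^-_X(0,n)$ does \emph{not} coincide with the classical degree-zero DT invariant. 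The classical invariant is the (weighted Euler characteristic of the) Hilbert scheme count, i.e.\ a \emph{pair} invariant with generating function $M(-q)^{\chi(X)}$, whereas $\bar{\mr{DT}}^-_X(0,n)$ is the stacky count of the sheaves themselves and equals $-\chi(X)\sum_{m|n}m^{-2}$ (already at $n=2$ these differ). The passage from the MacMahon function to $\Omega^-_X(0,n)=-\chi(X)$ therefore requires the pair-invariant identity of \cite[Corollary 7.23]{joyce-song} (exactly as in Lemma \ref{lem_app}) \emph{in addition to} the multiple-cover inversion; since you invoke the geometric analogue of Lemma \ref{lem_app} in the next sentence, the intended argument is correct, but you should delete the claim that the two invariants coincide and route the computation through the pair invariants explicitly.
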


\subsection{Main theorem}
We define the group homomorphism
\[
\pi\colon \Gamma_Q\twoheadrightarrow \Gamma_X
\]
by 
\[
\pi(e_0):=(1,0),\quad \pi(e_{j,k}):=(0,k).
\]
\begin{lem}\label{lem_34}
\begin{enumerate}
\item[\textup{(1)}] $Z^\pm_Q=Z^\pm_X\circ\pi$.
\item[\textup{(2)}] $\langle-,-\rangle_Q=\langle\pi(-),\pi(-)\rangle_X$.
\item[\textup{(3)}] If $n\leq N$, then  we have
\[
\Omega_X^-(r,n)=\sum_{\beta\in\pi^{-1}(r,n)}\Omega_Q^-(\beta).
\]
\end{enumerate}
\end{lem}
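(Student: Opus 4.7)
My plan is to verify (1), (2), (3) in turn, reducing each to a computation on the standard generators $e_0, e_{j,k}$ of $\Gamma_Q$ and then extending by linearity. For (1), direct substitution gives $Z_Q^\pm(e_0) = \theta_r^\pm = Z_X^\pm(1,0) = Z_X^\pm \circ \pi(e_0)$ and $Z_Q^\pm(e_{j,k}) = k\theta_n^\pm = Z_X^\pm(0,k) = Z_X^\pm \circ \pi(e_{j,k})$, so the two $\Z$-linear maps agree on a $\Z$-basis and hence on all of $\Gamma_Q$.

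For (2), I would read off $\langle -, -\rangle_Q$ directly from the arrow data of $Q$. Because the pairing is antisymmetric (the Calabi--Yau 3 convention), the loops $b_{j,k}$ that appear when $\chi>0$ contribute only to the symmetric part of the Ringel form and drop out; in particular $\langle e_{j,k}, e_{j,k}\rangle_Q = 0 = \langle (0,k),(0,k)\rangle_X$ and $\langle e_0, e_0\rangle_Q = 0 = \langle (1,0),(1,0)\rangle_X$. The $k$ arrows $a_{j,k,p}\colon 0 \to (j,k)$ give $|\langle e_0, e_{j,k}\rangle_Q| = k$, matching $|\langle (1,0),(0,k)\rangle_X| = k$, and pairings between distinct $e_{j,k}, e_{j',k'}$ vanish on both sides. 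Bilinearity completes the identity on all of $\Gamma_Q \times \Gamma_Q$.

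For (3), I would use case analysis on $(r, n)$ combined with Lemma \ref{lem_Q-} and Lemma \ref{lem_X-}, both of which have very restricted supports. In the easy case $(r, n) = (1, 0)$ one has $\pi^{-1}(1, 0) = \{e_0\}$ (since every $\pi(e_{j,k})$ has zero first coordinate), so both sides equal $1$. The main case is $r = 0$ with $0 < n \leq N$: the only $\beta \in \pi^{-1}(0, n)$ with $\Omega_Q^-(\beta) \neq 0$ are those supported on a single vertex $(j, k) \in I$ with the appropriate multiplicity, and the hypothesis $n \leq N$ guarantees that the relevant $(j, k)$ all lie in $I = I_{\chi, N}$; the sum of $-\mr{sgn}(\chi)$ over these contributions produces $-\chi(X) = \Omega_X^-(0, n)$. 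All remaining $(r, n)$ yield zero on both sides by inspection.

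The main obstacle is part (3): matching the enumeration of contributing $\beta$'s to the right-hand side is the delicate step, and the condition $n \leq N$ is essential, since for $n > N$ some of the relevant vertices would be excluded from $I$ and the identity would break. Parts (1) and (2) are essentially mechanical, with the only subtlety in (2) being the antisymmetrization convention that relates the Ringel form of a quiver to the CY3 Euler pairing on $\Gamma_X$.
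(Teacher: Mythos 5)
Your overall route coincides with the paper's: (1) and (2) are verified on the generators $e_0,e_{j,k}$ directly from the definitions, and (3) is deduced from Lemma \ref{lem_Q-} and Lemma \ref{lem_X-}. Two places need tightening, though. In (2) you only match $|\langle e_0,e_{j,k}\rangle_Q|=k=|\langle(1,0),(0,k)\rangle_X|$; the lemma asserts equality of the pairings themselves, and the sign is not cosmetic, since it enters the wall-crossing formula \eqref{eq_5} through the factor $(-1)^{\langle\beta_i,\beta_j\rangle}\langle\beta_i,\beta_j\rangle$. You should fix the orientation convention for the antisymmetrized Euler form once and for all and check that the $k$ arrows $a_{j,k,p}\colon 0\to(j,k)$ yield $\langle e_0,e_{j,k}\rangle_Q=k$ with the same sign as $\langle(1,0),(0,k)\rangle_X=k$, not merely the same magnitude.

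The more substantial issue is the enumeration in part (3), which you yourself single out as the delicate step but then do not carry out. For $r=0$ and $0<n\le N$, the classes $\beta\in\pi^{-1}(0,n)$ supported on a single vertex are the $m\cdot e_{j,k}$ with $mk=n$; if every positive multiple of $e_{j,k}$ contributed $-\mr{sgn}(\chi)$, the right-hand side would be $-\chi(X)$ times the number of divisors of $n$, which differs from $-\chi(X)$ as soon as $n>1$. The identity holds because only the multiplicity-one classes actually contribute: by Lemma \ref{lem_app} (for $\chi>0$, where each vertex $(j,k)$ carries a loop, so the local computation is that of the one-loop quiver) and by the rigidity of the simple module $s_{j,k}$ (for $\chi<0$), one has $\Omega_Q^-(m\cdot e_{j,k})=0$ for $m\ge 2$. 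Hence the contributing classes are exactly $e_{j,n}$ for $1\le j\le|\chi|$ --- and this is precisely where the hypothesis $n\le N$ enters, guaranteeing that the vertex $(j,n)$ belongs to $I_{\chi,N}$ --- so the sum is $|\chi|\cdot(-\mr{sgn}(\chi))=-\chi(X)$, matching Lemma \ref{lem_X-}. As written, ``the sum of $-\mr{sgn}(\chi)$ over these contributions produces $-\chi(X)$'' asserts the conclusion; the count above is what actually proves it.
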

\begin{proof}
The first and second claims follow directly from the definitions.
The third one is a consequence of Lemma \ref{lem_Q-} and Lemma \ref{lem_X-}.
\end{proof}
\begin{cor}\label{cor_32}
If $n\leq N$, then we have
\[
\bar{\mr{DT}}_X^-(r,n)=
\sum_{\beta\in\pi^{-1}(r,n)}\bar{\mr{DT}}_Q^-(\beta).
\]
\end{cor}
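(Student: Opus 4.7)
The plan is to invert the multiple-cover formula $\bar{\mr{DT}}(\beta)=\sum_{m\geq 1,\,m\mid\beta}\Omega(\beta/m)/m^2$ on both sides and reduce the identity to Lemma \ref{lem_34}(3). The argument has three steps: expand $\bar{\mr{DT}}_Q^-$ in terms of $\Omega_Q^-$, change the order of summation via the substitution $\beta=m\beta'$, and then apply Lemma \ref{lem_34}(3) fibrewise.

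Concretely, I would start from
\[
\sum_{\beta\in\pi^{-1}(r,n)}\bar{\mr{DT}}_Q^-(\beta)=\sum_{\beta\in\pi^{-1}(r,n)}\,\sum_{m\geq 1,\,m\mid\beta}\frac{\Omega_Q^-(\beta/m)}{m^2}.
\]
The index set $\{(m,\beta):m\geq 1,\,m\mid\beta,\,\pi(\beta)=(r,n)\}$ is in bijection with $\{(m,\beta'):m\geq 1,\,\pi(\beta')=(r/m,n/m)\}$ via $\beta'=\beta/m$, and since $\pi$ is linear the latter condition forces $m$ to divide both $r$ and $n$. After substitution,
\[
\sum_{\beta\in\pi^{-1}(r,n)}\bar{\mr{DT}}_Q^-(\beta)=\sum_{m\mid r,\,m\mid n}\frac{1}{m^2}\sum_{\beta'\in\pi^{-1}(r/m,\,n/m)}\Omega_Q^-(\beta').
\]

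Next I would invoke Lemma \ref{lem_34}(3): because $n\leq N$, we also have $n/m\leq N$ for every $m\geq 1$, so its hypothesis is satisfied and the inner sum equals $\Omega_X^-(r/m,n/m)$. Substituting back,
\[
\sum_{\beta\in\pi^{-1}(r,n)}\bar{\mr{DT}}_Q^-(\beta)=\sum_{m\mid r,\,m\mid n}\frac{\Omega_X^-(r/m,n/m)}{m^2},
\]
and the right-hand side is precisely $\bar{\mr{DT}}_X^-(r,n)$ by the multiple-cover formula on $X$.

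There is no genuine obstacle; the proof is a clean interchange of summation combined with Lemma \ref{lem_34}(3). The only routine points are the verification that $(m,\beta)\leftrightarrow(m,\beta')$ is an honest bijection of index sets, and the observation that the bound $n\leq N$ is preserved after dividing by $m$, so that Lemma \ref{lem_34}(3) remains applicable termwise.
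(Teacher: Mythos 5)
Your proof is correct and is essentially the paper's own argument run in the opposite direction: the paper starts from $\bar{\mr{DT}}_X^-(r,n)$, expands by the multiple-cover formula, applies Lemma \ref{lem_34}(3), and reindexes the double sum over $\{(m,\beta'):\pi(m\beta')=(r,n)\}$ to arrive at $\sum_{\beta'}\bar{\mr{DT}}_Q^-(\beta')$, which is exactly your chain of equalities read backwards. The same decomposition, the same reindexing bijection, and the same key lemma are used, so there is nothing to add.
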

\begin{proof}
\begin{align*}
\bar{\mr{DT}}_X^-(r,n)&=
\sum_{m|(r,n)}\Omega^-_X(r/m,n/m)/m^2\\
&=\sum_{m|(r,n)}\sum_{\beta\in\pi^{-1}{(r/m,n/m)}}\Omega^-_Q(\beta)/m^2\\
&=\sum_{m,\beta;\pi(m\beta)=(r,n)}\Omega^-_Q(\beta)/m^2\\
&=\sum_{\beta'\in \pi^{-1}(r,n)}\sum_{m|\beta'}\Omega^-_Q(\beta'/m)/m^2\\
&=\sum_{\beta'\in \pi^{-1}(r,n)}\bar{\mr{DT}}_Q^-(\beta').
\end{align*}
\end{proof}

For a nonnegative integer $l$ and a sequence $\vec{\alpha}=(\alpha_1,\ldots,\alpha_l)\in (\Gamma_X)^l$, the rational number
\[
U(\vec{\alpha};Z_X^-,Z_X^+)\in \Q
\]
is given by Equation (36) in \cite{joyce-song}.
\begin{lem}\label{lem_33}
\[
U(\vec{\beta};Z_Q^-,Z_Q^+)
=
U(\pi(\vec{\beta});Z_X^-,Z_X^+).
\]
\end{lem}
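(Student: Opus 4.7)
The plan is to unwind the definition of $U(\vec\alpha;\tau,\tilde\tau)$ given in \cite[Equation (36)]{joyce-song} and observe that it depends on its vector argument $\vec\alpha$ only through the complex numbers $\tau(\alpha_i)$ and $\tilde\tau(\alpha_i)$ (equivalently, the central charges of the $\alpha_i$ under both stability conditions), not on the classes themselves or on the ambient Grothendieck group. This is essentially built into the Joyce--Song formula: $U$ is a combinatorial expression assembled from the sign factors $S$ and $L$ defined in \cite[(35)]{joyce-song}, whose inputs are the orderings among the phases $\arg \tau(\alpha_{i_1}+\cdots+\alpha_{i_k})$ and $\arg \tilde\tau(\alpha_{i_1}+\cdots+\alpha_{i_k})$ for the various partial sums.

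First I would write out $U(\vec\beta;Z_Q^-,Z_Q^+)$ as the corresponding sum over ordered partitions, and observe that every central-charge value appearing there has the form $Z_Q^\pm(\beta_{i_1}+\cdots+\beta_{i_k})$ for some subsequence. Applying Lemma \ref{lem_34}(1),
\[
Z_Q^\pm(\beta_{i_1}+\cdots+\beta_{i_k})
=Z_X^\pm\bigl(\pi(\beta_{i_1})+\cdots+\pi(\beta_{i_k})\bigr),
\]
so each such value coincides with the corresponding partial-sum central charge built from $\pi(\vec\beta)$ via $Z_X^\pm$. Hence every $S$-factor and $L$-factor in the expansion of $U(\vec\beta;Z_Q^-,Z_Q^+)$ matches the corresponding factor in the expansion of $U(\pi(\vec\beta);Z_X^-,Z_X^+)$, term by term.

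Summing up, the two expressions for $U$ are identical as rational numbers, which gives the claim. The only potential obstacle is notational: one must check that the Joyce--Song definition of $U$ really is invariant under pulling back along a homomorphism of lattices that intertwines the stability conditions, i.e.\ depends only on the pair $(Z_Q^-, Z_Q^+)$ evaluated on the $\beta_i$ and not, for instance, on the Euler form (the Euler form enters the wall-crossing formula \eqref{eq_5} through the separate factor $\langle\beta_i,\beta_j\rangle$, not through $U$). Once this is verified by inspection of \cite[(35),(36)]{joyce-song}, the identity is immediate.
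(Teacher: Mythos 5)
Your argument is correct and is exactly the paper's proof: the paper also deduces the identity directly from the definition of $U$ in \cite[Equation (36)]{joyce-song} together with Lemma \ref{lem_34}~(1), since $U$ depends on its arguments only through the central charges of partial sums. Your write-up simply spells out the details that the paper leaves implicit.
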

\begin{proof}
The claim follows directly from the definition \cite[Equation (36)]{joyce-song} and Lemma \ref{lem_34} (1).
\end{proof}
The following is the main theorem of this paper:
\begin{thm}\label{thm_main}
For $\alpha=(r,n)\in\Gamma_X$ with $n\leq N$, we have
\[
\bar{\mr{DT}}_X^+(\alpha)=\sum_{\beta\in\pi^{-1}(\alpha)}\bar{\mr{DT}}_Q^+(\beta).
\]
\end{thm}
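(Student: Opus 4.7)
The plan is to expand the right-hand side via the Joyce-Song wall-crossing formula \eqref{eq_5} applied to each $\bar{\mr{DT}}_Q^+(\beta)$, and then rearrange the resulting triple sum into the corresponding wall-crossing formula on the $X$-side. Since $\pi$ is surjective and Lemmas \ref{lem_34}, \ref{lem_33} assert that the combinatorial ingredients of the formula depend only on the $\pi$-images, this rearrangement should match term-by-term.

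First, I would write, for each $\beta\in\pi^{-1}(\alpha)$,
\[
\bar{\mr{DT}}_Q^+(\beta)=\sum_{l,\vec\beta,\Upsilon}\bigl(-\tfrac{1}{2}\bigr)^{l-1}U(\vec\beta;Z_Q^-,Z_Q^+)\prod_{\underset{i}{\bullet}\to\underset{j}{\bullet}\text{ in }\Upsilon}(-1)^{\langle\beta_i,\beta_j\rangle_Q}\langle\beta_i,\beta_j\rangle_Q\prod_i\bar{\mr{DT}}_Q^-(\beta_i),
\]
and swap the order of summation: summing over $\beta\in\pi^{-1}(\alpha)$ and then over decompositions $\vec\beta$ of $\beta$ amounts to summing directly over all tuples $\vec\beta\in(\Gamma_Q)^l$ satisfying $\pi(\beta_1+\cdots+\beta_l)=\alpha$. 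I would then partition such tuples according to their images $\vec\alpha=\pi(\vec\beta)=(\alpha_1,\dots,\alpha_l)$, noting that this is a partition indexed by tuples $\vec\alpha\in(\Gamma_X)^l$ with $\alpha_1+\cdots+\alpha_l=\alpha$.

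Next, by Lemma \ref{lem_33} the factor $U(\vec\beta;Z_Q^-,Z_Q^+)$ equals $U(\vec\alpha;Z_X^-,Z_X^+)$, and by Lemma \ref{lem_34}(2) each $\langle\beta_i,\beta_j\rangle_Q$ equals $\langle\alpha_i,\alpha_j\rangle_X$; both depend only on $\vec\alpha$, so they pull outside the innermost sum over $\beta_i\in\pi^{-1}(\alpha_i)$. That innermost sum factors as $\prod_i\bigl(\sum_{\beta_i\in\pi^{-1}(\alpha_i)}\bar{\mr{DT}}_Q^-(\beta_i)\bigr)$, and I would apply Corollary \ref{cor_32} to each factor to replace it by $\bar{\mr{DT}}_X^-(\alpha_i)$. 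The resulting expression is precisely the wall-crossing formula \eqref{eq_5} applied on the $X$-side to $\bar{\mr{DT}}_X^+(\alpha)$, which gives the theorem.

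The main subtlety is to make sure Corollary \ref{cor_32} is applicable to each $\alpha_i=\pi(\beta_i)=(r_i,n_i)$, i.e.\ that $n_i\leq N$. By Lemma \ref{lem_Q-}, the factor $\prod_i\bar{\mr{DT}}_Q^-(\beta_i)$ vanishes unless every $\beta_i$ lies in $\Z_{\geq 0}e_0\sqcup\bigsqcup_{(j,k)}\Z_{\geq 0}e_{j,k}$; in particular each $\pi(\beta_i)$ has nonnegative $n$-coordinate, so from $\sum_i n_i=n\leq N$ we get $0\leq n_i\leq N$ for every surviving term. This verifies that the hypothesis of Corollary \ref{cor_32} holds throughout, and closes the argument.
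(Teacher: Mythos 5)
Your argument is essentially the paper's own proof run in the opposite direction: the paper starts from $\bar{\mr{DT}}_X^+(\alpha)$, expands via \eqref{eq_5} on the $X$-side, and uses Corollary \ref{cor_32}, Lemma \ref{lem_34}(2) and Lemma \ref{lem_33} to convert to the $Q$-side, which is exactly your chain of equalities read backwards. Your closing check that $n_i\leq N$ holds on every surviving term (via the vanishing in Lemma \ref{lem_Q-}), so that Corollary \ref{cor_32} is indeed applicable factor by factor, is a point the paper leaves implicit and is a welcome addition.
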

\begin{proof}
\begin{align*}
&\bar{\mr{DT}}_X^+(\alpha)
\overset{\text{\cite[(79)]{joyce-song}}}{=}
\sum_{
\begin{subarray}{c}
l\geq 1,\vec{\alpha}\in (\Gamma_X)^l,\\
\alpha_1+\cdots+\alpha_l=\alpha.
\end{subarray}
}
\ \sum_{
\begin{subarray}{c}
\text{connected simply-connected}\\
\text{oriented graphs $\Upsilon$ with vertices $1,\ldots,l$,}\\
\text{$\underset{i}{\bullet}\to\underset{j}{\bullet}$ implies $i<j$.}
\end{subarray}
}\\
&\quad\quad\quad\quad\biggl(-\frac{1}{2}\biggr)^{l-1}U(\vec{\alpha};Z_X^-,Z_X^+)
\sum_{\underset{i}{\bullet}\to\underset{j}{\bullet}\text{ in $\Upsilon$}}(-1)^{\langle\alpha_i,\alpha_j\rangle}\langle\alpha_i,\alpha_j\rangle\prod_i\bar{\mr{DT}}_X^-(\alpha_i)\\
&\overset{\text{Corollary \ref{cor_32}}}{=}
\sum_{
\begin{subarray}{c}
l\geq 1,\vec{\alpha}\in (\Gamma_X)^l,\\
\alpha_1+\cdots+\alpha_l=\alpha.
\end{subarray}
}
\ \sum_{
\begin{subarray}{c}
\text{connected simply-connected}\\
\text{oriented graphs $\Upsilon$ with vertices $1,\ldots,l$;}\\
\text{$\underset{i}{\bullet}\to\underset{j}{\bullet}$ implies $i<j$.}
\end{subarray}
}
\biggl(-\frac{1}{2}\biggr)^{l-1}U(\vec{\alpha};Z_X^-,Z_X^+)
\\
&
\quad\quad\quad\quad\sum_{\underset{i}{\bullet}\to\underset{j}{\bullet}\text{ in $\Upsilon$}}(-1)^{\langle\alpha_i,\alpha_j\rangle}\langle\alpha_i,\alpha_j\rangle\prod_i\,\Biggl(\,\sum_{\beta_i\in\pi^{-1}(\alpha_i)}\bar{\mr{DT}}_Q^-(\beta_i)\,\Biggr)\\
&\overset{\text{Lemma \ref{lem_34} (2)}}{\overset{\text{and Lemma \ref{lem_33}}}{=}}
\sum_{
\begin{subarray}{c}
l\geq 1,\vec{\alpha}\in (\Gamma_X)^l,\\
\alpha_1+\cdots+\alpha_l=\alpha.
\end{subarray}
}
\ \sum_{
\begin{subarray}{c}
\text{connected simply-connected}\\
\text{oriented graphs $\Upsilon$ with vertices $1,\ldots,l$;}\\
\text{$\underset{i}{\bullet}\to\underset{j}{\bullet}$ implies $i<j$.}
\end{subarray}
}
\ \sum_{
\begin{subarray}{c}
\vec{\beta}\in (\Gamma_Q)^l,\\
\beta_i\in\pi^{-1}(\alpha_i).
\end{subarray}
}
\\
&
\quad\quad\quad\quad\biggl(-\frac{1}{2}\biggr)^{l-1}U(\vec{\beta};Z_Q^-,Z_Q^+)
\sum_{\underset{i}{\bullet}\to\underset{j}{\bullet}\text{ in $\Upsilon$}}(-1)^{\langle\beta_i,\beta_j\rangle}\langle\beta_i,\beta_j\rangle\prod_i\bar{\mr{DT}}_Q^-(\beta_i)\\
&\quad\quad = \quad\quad
\sum_{\beta\in\pi^{-1}(\alpha)}\ 
\sum_{
\begin{subarray}{c}
l\geq 1,\vec{\beta}\in (\Gamma_Q)^l,\\
\beta_1+\cdots+\beta_l=\beta.
\end{subarray}
}
\ \sum_{
\begin{subarray}{c}
\text{connected simply-connected}\\
\text{oriented graphs $\Upsilon$ with vertices $1,\ldots,l$;}\\
\text{$\underset{i}{\bullet}\to\underset{j}{\bullet}$ implies $i<j$.}
\end{subarray}
}
\\
&
\quad\quad\quad\quad\biggl(-\frac{1}{2}\biggr)^{l-1}U(\vec{\beta};Z_Q^-,Z_Q^+)
\sum_{\underset{i}{\bullet}\to\underset{j}{\bullet}\text{ in $\Upsilon$}}(-1)^{\langle\beta_i,\beta_j\rangle}\langle\beta_i,\beta_j\rangle\prod_i\bar{\mr{DT}}_Q^-(\beta_i)\\
&\overset{\text{Equation \eqref{eq_5}}}{=}\sum_{\beta\in\pi^{-1}(\alpha)}\bar{\mr{DT}}_Q^+(\beta).
\end{align*}
\end{proof}
\begin{cor}\label{cor_main}
For $\alpha=(r,n)\in\Gamma_X$ with $n\leq N$, we have
\[
\Omega_X^+(\alpha)=\sum_{\beta\in\pi^{-1}(\alpha)}\Omega_Q^+(\beta).
\]
\end{cor}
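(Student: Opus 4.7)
The plan is to Möbius-invert the multicover formula relating $\Omega^+$ and $\bar{\mr{DT}}^+$, and then reduce to Theorem \ref{thm_main}. Starting from the defining relation
\[
\bar{\mr{DT}}_X^+(\alpha)=\sum_{m\geq 1,\,m|\alpha}\Omega_X^+(\alpha/m)/m^2,
\]
a short Dirichlet-convolution computation (using $\sum_{km=n}\mu(k)/(km)^2=\delta_{n,1}/n^2$) gives
\[
\Omega_X^+(\alpha)=\sum_{k\geq 1,\,k|\alpha}\frac{\mu(k)}{k^2}\,\bar{\mr{DT}}_X^+(\alpha/k),
\]
and symmetrically $\Omega_Q^+(\beta)=\sum_{k|\beta}\frac{\mu(k)}{k^2}\bar{\mr{DT}}_Q^+(\beta/k)$ on the quiver side.

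Next I apply Theorem \ref{thm_main} to each term $\bar{\mr{DT}}_X^+(\alpha/k)$. Writing $\alpha=(r,n)$ with $n\leq N$, the second component of $\alpha/k$ is $n/k\leq n\leq N$, so the theorem applies and yields
\[
\bar{\mr{DT}}_X^+(\alpha/k)=\sum_{\beta'\in\pi^{-1}(\alpha/k)}\bar{\mr{DT}}_Q^+(\beta').
\]
Substituting back produces
\[
\Omega_X^+(\alpha)=\sum_{k|\alpha}\frac{\mu(k)}{k^2}\sum_{\beta'\in\pi^{-1}(\alpha/k)}\bar{\mr{DT}}_Q^+(\beta').
\]

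Finally I expand the target $\sum_{\beta\in\pi^{-1}(\alpha)}\Omega_Q^+(\beta)$ using the $Q$-side inversion formula, obtaining a double sum over pairs $(k,\beta)$ with $k|\beta$ and $\pi(\beta)=\alpha$, weighted by $\mu(k)/k^2\cdot \bar{\mr{DT}}_Q^+(\beta/k)$. Setting $\beta=k\beta'$ and using that $\pi$ is a group homomorphism, such pairs are in bijection with pairs $(k,\beta')$ satisfying $k|\alpha$ and $\pi(\beta')=\alpha/k$, and under this bijection the summands match term by term. The two double sums therefore coincide, establishing the corollary. The only real obstacle is the bookkeeping of divisors through $\pi$ in this last step; it is essentially the same reindexing that appears in the proof of Corollary \ref{cor_32}, run in the opposite direction, and once it is carried out the result is immediate.
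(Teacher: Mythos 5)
Your proof is correct and is essentially the paper's argument: the paper's one-line proof (``in a similar way as Corollary \ref{cor_32}, using Theorem \ref{thm_main}'') amounts to exactly the Möbius inversion of the multicover formula combined with the reindexing $\beta=k\beta'$ through $\pi$ that you carry out explicitly. Your check that $n/k\leq N$ so that Theorem \ref{thm_main} applies to each $\alpha/k$ is a worthwhile detail the paper leaves implicit.
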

\begin{proof}
Using Theorem \ref{thm_main}, we can show the claim in a similar way as Corollary \ref{cor_32}.
\end{proof}
\begin{thm}
$\Omega_X^+(r,n)\in\Z$.
\end{thm}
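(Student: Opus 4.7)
The plan is to apply Corollary \ref{cor_main} together with Theorem \ref{thm_js_integrality}. Given $(r,n) \in \Gamma_X$, I would first choose an integer $N \geq n$ and build the quiver $Q = Q_{\chi,N}$ of Section \ref{subsec_quiver}. With this choice the hypothesis $n \leq N$ of Corollary \ref{cor_main} is satisfied, so
\[
\Omega_X^+(r,n) = \sum_{\beta \in \pi^{-1}(r,n)} \Omega_Q^+(\beta).
\]
By Theorem \ref{thm_js_integrality} each summand $\Omega_Q^+(\beta)$ lies in $\Z$, so it suffices to check that the sum on the right is finite.

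For the finiteness, I would observe that $\Omega_Q^+(\beta)$ vanishes unless $\beta$ is the class of an actual (nonzero) $Q$-module, i.e.\ all components of $\beta$ are nonnegative. Writing $\beta = r_0 \cdot e_0 + \sum_{(j,k) \in I} \beta_{j,k} \cdot e_{j,k}$ with $r_0, \beta_{j,k} \in \Z_{\geq 0}$, the condition $\pi(\beta) = (r,n)$ forces $r_0 = r$ and $\sum_{(j,k) \in I} k \cdot \beta_{j,k} = n$. Since $|I| = |\chi| \cdot N$ is finite and each $\beta_{j,k}$ is a nonnegative integer bounded by $n$, the set of effective $\beta \in \pi^{-1}(r,n)$ is finite, so the sum is a finite sum of integers.

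Putting these two observations together yields $\Omega_X^+(r,n) \in \Z$. I do not anticipate any real obstacle: the structural work has been carried out in Theorem \ref{thm_main} and its corollary, and the integrality on the quiver side is handed to us by Joyce--Song. The only point that deserves a one-line check is the effectivity/finiteness argument above, which ensures that we are summing integers rather than a potentially divergent or fractional series. Note also that the right-hand side of Corollary \ref{cor_main} is independent of the choice of $N \geq n$ (since $\Omega_X^+(r,n)$ is), so enlarging $N$ causes no issue.
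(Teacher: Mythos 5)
Your proposal is correct and follows exactly the paper's own proof: choose $N\geq n$, apply Corollary \ref{cor_main}, and invoke Theorem \ref{thm_js_integrality} for the integrality of each $\Omega_Q^+(\beta)$. The extra finiteness check on the effective classes in $\pi^{-1}(r,n)$ is a sensible detail the paper leaves implicit, but it does not change the argument.
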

\begin{proof}
Take an integer $N\geq n$. Then the claim follows from Theorem \ref{thm_js_integrality} and Corollary \ref{cor_main}.
\end{proof}

\subsection{Symmetry}
Let $V$ be a $Z_Q^+$-semistable $Q$-module with dimension vector $(r,\mathbf{n})$ such that $\mathbf{n}\neq \vec{0}$ and $\pi(r,\mathbf{n})=(r,n)$.
We set 
\[
\tilde{V}:=\bigoplus_{(j,k,p)}V_{j,k},\quad f_V:=\bigoplus_{(j,k,p)}a_{j,k,p}\colon V\to \tilde{V}.
\]
Since $V$ is $Z_Q^+$-semistable, $f_V$ is injective. 
\begin{prop}\label{prop_dual}
For $\mathbf{n}\neq \vec{0}$, we have
\[
\Omega^+_Q(r,\mathbf{n})=\Omega^+_Q(n-r,\mathbf{n}).
\]
\end{prop}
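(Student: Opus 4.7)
The plan is to construct, for a $Z_Q^+$-semistable module $V$ with dimension vector $(r,\mathbf{n})$ and $\mathbf{n}\neq\vec 0$, an explicit dual module $W=\Phi(V)$ with dimension vector $(n-r,\mathbf{n})$, and to verify that $\Phi$ is an anti-involution on the full subcategory of such modules which extends to an isomorphism of moduli stacks. The construction is a Bernstein-Gelfand-Ponomarev reflection at vertex $0$ combined with $k$-linear duality: set $W_{j,k}:=V_{j,k}^*$, equipped with the dual loop $b_{j,k}^*$ when $\chi>0$, and $W_0:=\coker(f_V)^*$. The arrows $a_{j,k,p}^W\colon W_0\to W_{j,k}$ are defined by
\[
W_0=\coker(f_V)^*\hookrightarrow \tilde V^*=\bigoplus_{(j',k',p')}V_{j',k'}^*\twoheadrightarrow V_{j,k}^*=W_{j,k},
\]
where the first map is dual to the canonical surjection $\tilde V\twoheadrightarrow\coker(f_V)$ and the second is projection onto the $(j,k,p)$-th summand. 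The dimension count $\dim W_0=\dim\tilde V-\dim V_0=n-r$ confirms the target dimension vector.

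The key facts I would check are: (a) $f_W\colon W_0\to\tilde W=\tilde V^*$ coincides with the inclusion $\coker(f_V)^*\hookrightarrow\tilde V^*$ and in particular is injective; (b) $\Phi^2\cong\mathrm{id}$, which follows from dualizing the short exact sequence $0\to V_0\to\tilde V\to\coker(f_V)\to 0$ to $0\to W_0\to\tilde V^*\to V_0^*\to 0$ and identifying $\coker(f_W)^*=V_0^{**}=V_0$ together with the arrows; and (c) for $\mathbf{n}\neq\vec 0$, $Z_Q^+$-semistability of $V$ is equivalent to $f_V$ being injective, because the phase of $V$ lies strictly between $\arg\theta_n^+$ and $\arg\theta_r^+$, and the only way to violate semistability is to have a subobject supported entirely at the vertex $0$. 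Combining (a) and (c), $\Phi$ sends $Z_Q^+$-semistable modules to $Z_Q^+$-semistable modules in both directions, giving a bijection on isomorphism classes.

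Next, I would upgrade $\Phi$ to an isomorphism of moduli stacks $\mathcal{M}_Q^{(r,\mathbf{n})}(Z_Q^+)\simeq\mathcal{M}_Q^{(n-r,\mathbf{n})}(Z_Q^+)$ by carrying out the construction in families: dualization and cokernel are algebraic, and the formulas for the new arrows are natural. The equality $\bar{\mathrm{DT}}_Q^+(r,\mathbf{n})=\bar{\mathrm{DT}}_Q^+(n-r,\mathbf{n})$ then follows since the Behrend function is intrinsic to the stack. Because $m\mid(r,\mathbf{n})$ is equivalent to $m\mid(n-r,\mathbf{n})$ (using $m\mid\mathbf{n}\Rightarrow m\mid n$), the Möbius-type inversion defining the BPS invariants translates this DT equality into $\Omega_Q^+(r,\mathbf{n})=\Omega_Q^+(n-r,\mathbf{n})$ by induction on $\mathbf{n}$. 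The delicate point is step (c)---correctly reducing semistability to the injectivity of $f_V$---together with the compatibility of the Behrend functions under the anti-equivalence, which should follow from functoriality of the construction but must be spelled out carefully.
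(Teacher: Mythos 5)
Your construction is, up to factoring through the opposite quiver, the same as the paper's: the paper composes vector-space duality (which identifies $Z_Q^+$-semistable $Q$-modules with $Z^-_{Q^{\mathrm{op}}}$-semistable $Q^{\mathrm{op}}$-modules of the same dimension vector) with the reflection $V\mapsto\coker f_V$ at the source vertex $0$ (which shifts $(r,\mathbf{n})$ to $(n-r,\mathbf{n})$), and your $\Phi$ is exactly this composite written out in one formula. The problem is your step (c), which is the step carrying all the weight, and it is false. For $\mathbf{n}\neq\vec 0$, injectivity of $f_V$ is \emph{necessary} for $Z_Q^+$-semistability (a nonzero $\ker f_V$ is a subobject supported at $0$, of maximal phase $\arg\theta^+_r$), but it is not \emph{sufficient}, because destabilizing subobjects need not be supported at vertex $0$. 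Concretely, for $Q_{1,1}$ take $V_0=\C v_0$, $V_{1,1}=\C^2$, $a_{1,1,1}(v_0)=e_1$ and $b_{1,1}=0$. Then $f_V$ is injective, yet the submodule generated by $v_0$ has dimension vector $(1,1)$ and phase $\arg(\theta^+_r+\theta^+_n)>\arg(\theta^+_r+2\theta^+_n)$, so $V$ is unstable. The stability function $Z_Q^+$ assigns a whole interval of phases to classes $(r',\mathbf{n}')$ with both entries nonzero, so ``the only way to violate semistability is a subobject supported at $0$'' is not true.

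Consequently your chain ``$V$ semistable $\Rightarrow f_V$ injective $\Rightarrow f_W$ injective $\Rightarrow W$ semistable'' breaks at its last link, and $\Phi$ has not been shown to preserve semistability. The repair is to argue as the paper (tersely) does: on modules with $f_V$ injective, i.e.\ with no direct summand $s_0$, the reflection at the source $0$ is an equivalence onto the corresponding subcategory of $Q^{\mathrm{op}}$-modules, and one must check directly that it carries the subobjects relevant for $Z_Q^+$-(in)stability of $V$ to subobjects relevant for $Z^-_{Q^{\mathrm{op}}}$-(in)stability of $\coker f_V$, the dimension vectors transforming by $(r',\mathbf{n}')\mapsto(n'-r',\mathbf{n}')$ in a phase-order-preserving way. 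Your items (a) and (b), the upgrade to families and stacks, the intrinsic nature of the Behrend function under the resulting stack isomorphism, and the M\"obius-inversion step from $\bar{\mathrm{DT}}{}^+_Q$ to $\Omega^+_Q$ are all fine; only the semistability comparison needs to be redone along these lines.
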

\begin{proof}
Let $Q^{\mr{op}}$ be the opposite quiver of $Q$, that is, $Q^{\mr{op}}$ is the quiver with the same vertex set as $Q$ and given by reversing all arrows in $Q$. Let $Z^\pm_{Q^{\mr{op}}}$ be the stability conditions given from $Z^\pm_{Q}$ by the canonical identification $\Gamma_Q=\Gamma_{Q^{\mr{op}}}$. 

By taking vector space dual, we get the natural isomorphism between moduli stack of $Z_Q^+$ semistable $Q$-modules and the one of $Z_{Q^{\mr{op}}}^-$ semistable ${Q^{\mr{op}}}$-modules with the same dimension vectors.

By taking $\mr{coker}f_V$, we get the natural isomorphism between moduli stack of $Z_Q^+$ semistable $Q$-modules with dimension vectors $(r,\mathbf{n})$ and the one of $Z_{Q^{\mr{op}}}^-$ semistable ${Q^{\mr{op}}}$-modules with dimension vectors $(n-r,\mathbf{n})$.

Hence the claim follows.
\end{proof}
\begin{rem}
The operator taking $\mr{coker}f_V$ is nothing but the reflection functor in the sense of \cite{BGP_reflection}. 
A similar argument has appeared in \cite[\S 5.3]{gross_pandharipande}.
\end{rem}
\begin{thm}\label{thm_dual}
For $n\neq 0$, we have
\[
\Omega^+_X(r,n)=\Omega^+_X(n-r,n).
\]
\end{thm}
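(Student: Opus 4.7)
The plan is to reduce the statement for $X$ to the statement for the quiver $Q$, which is already proved in Proposition \ref{prop_dual}, by combining it term-by-term with Corollary \ref{cor_main}.

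First I choose $N$ so that $N \geq |n|$; this is harmless because $\Omega^+_X(r,n)$ does not depend on $N$, while $Q = Q_{\chi,N}$ has been engineered precisely so that Corollary \ref{cor_main} is available in this range. Applying Corollary \ref{cor_main} gives
\[
\Omega^+_X(r,n) \;=\; \sum_{\beta \in \pi^{-1}(r,n)} \Omega^+_Q(\beta).
\]
A dimension vector $\beta \in \pi^{-1}(r,n)$ is of the form $(r,\mathbf{n})$ with $\mathbf{n} = (n_{j,k})_{(j,k) \in I}$ satisfying $\sum_{j,k} k \cdot n_{j,k} = n$. Because $n \neq 0$, any such $\mathbf{n}$ is automatically nonzero, so Proposition \ref{prop_dual} applies to every summand and yields $\Omega^+_Q(r,\mathbf{n}) = \Omega^+_Q(n-r,\mathbf{n})$.

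The remaining step is to recognize the resulting sum on the other side. The map $(r,\mathbf{n}) \mapsto (n-r,\mathbf{n})$ is a bijection from $\pi^{-1}(r,n)$ to $\pi^{-1}(n-r,n)$, since $\pi(n-r,\mathbf{n}) = (n-r,\sum_{j,k} k\cdot n_{j,k}) = (n-r,n)$ and the second component $\mathbf{n}$ is preserved. Hence
\[
\sum_{\beta \in \pi^{-1}(r,n)} \Omega^+_Q(\beta) \;=\; \sum_{\beta' \in \pi^{-1}(n-r,n)} \Omega^+_Q(\beta'),
\]
and a second application of Corollary \ref{cor_main} (again permissible since $N$ was chosen with $N \geq |n|$, so $n \leq N$) identifies the right-hand side with $\Omega^+_X(n-r,n)$.

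The only genuinely delicate point is verifying that the $\mathbf{n} \neq \vec{0}$ hypothesis in Proposition \ref{prop_dual} holds for every term appearing; this is exactly where the assumption $n \neq 0$ in the theorem is used, and everything else is a straightforward bookkeeping exercise with the two corollaries already in place.
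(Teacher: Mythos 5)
Your proof is correct and follows exactly the paper's route: the paper's own proof of Theorem \ref{thm_dual} is the one-line deduction from Corollary \ref{cor_main} and Proposition \ref{prop_dual}, and your write-up simply makes explicit the term-by-term application of Proposition \ref{prop_dual} and the bijection $(r,\mathbf{n})\mapsto(n-r,\mathbf{n})$ between the fibers $\pi^{-1}(r,n)$ and $\pi^{-1}(n-r,n)$. The observation that $n\neq 0$ forces $\mathbf{n}\neq\vec{0}$ in every summand is exactly the point where the hypothesis is used, as you note.
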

\begin{proof}
The claims follows from Corollary \ref{cor_main} and Proposition \ref{prop_dual}.
\end{proof}

\subsection{Appendix}
\subsubsection{MacMahon function via $\Omega_Q^+(1,\mathbf{n})$}
In this subsection, we give an explicit computation of the invariants $\Omega_Q^+(1,\mathbf{n})$ for $\chi=1$.

First, note that the moduli stack $\mca{M}^{(1,\mathbf{n})}(Z^+_Q)$ has the following expression:
\[
\mca{M}^{(1,\mathbf{n})}(Z^+_Q)
=\bigl[\mr{M}^{(1,\mathbf{n})}(Z^+_Q)/C^*\bigr],
\]
where $\mr{M}^{(1,\mathbf{n})}(Z^+_Q)$ is a smooth scheme of dimension 
\[
d(\mathbf{n}):=\sum_{(j,k)\in I}k\cdot \mathbf{n}_{j,k}.
\]
Hence we have 
\begin{equation}\label{eq_6}
\Omega^+_Q(1,\mathbf{n})=(-1)^{d(\mathbf{n})}\chi(\mr{M}^{(1,\mathbf{n})}(Z^+_Q)).
\end{equation}

We put $J:=\{(j,k,p)\mid (j,k)\in I, 1\leq p\leq k\}$.
For a map $f\colon J\to \Z_{\geq 0}$, we define a $Q$-module $V(f)$ as follows: 
\[
V(f)_0:=\C\cdot v_0,\quad V(f)_{j,k}:=\bigoplus_{1\leq p\leq k}\bigoplus_{1\leq a\leq f(j,k,p)}\C\cdot v_{j,k,p,a}
\]
and 
\begin{align*}
a_{j,k,p}(v_0)&:=
\begin{cases}
0, & f(j,k,p)=0,\\
v_{j,k,p,1}, & \text{otherwise},
\end{cases}\\
b_{j,k}(v_{j,k,p,a})&:=
\begin{cases}
0, & a=f(j,k,p),\\
v_{j,k,p,a+1}, & \text{otherwise}.
\end{cases}
\end{align*}

Let $A$ be the number of arrows in $Q$. 
The $A$-dimensional torus $T:=(\C^*)^A$ acts on $\mr{M}^{(1,\mathbf{n})}(Z^+_Q)$.
We can check that the $T$-fixed point set is isolated and
\begin{equation}\label{eq_55}
\mr{M}^{(1,\mathbf{n})}(Z^+_Q)^T=\Bigl\{V(f)\,\Big|\, \sum_p f(j,k,p)=\mathbf{n}_{j,k}\Bigr\}.
\end{equation}
Let $\mathbf{q}_{j,k}$ ($(j,k)\in I$) be formal variables and we set
\[
\mathbf{q}^{\mathbf{n}}:=\prod_{(j,k)\in I} (\mathbf{q}_{j,k})^{\mathbf{n}_{j,k}}.
\]
\begin{thm}
\[
\sum_{\mathbf{n}} \Omega^+_Q(1,\mathbf{n})\cdot \mathbf{q}^{\mathbf{n}}=\prod_{(j,k)\in I}(1-(-1)^k\mathbf{q}_{j,k})^{-1}
\]
\end{thm}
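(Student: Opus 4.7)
The plan is to combine Equation~\eqref{eq_6} with equivariant localization for the torus $T=(\C^*)^A$ acting on the smooth moduli $\mr{M}^{(1,\mathbf{n})}(Z^+_Q)$ by independently rescaling the arrows of $Q$. Equation~\eqref{eq_55} identifies the $T$-fixed locus with a finite set, so a Bialynicki-Birula argument (applied to a generic cocharacter $\lambda\colon\C^*\to T$ whose attracting flow retracts $\mr{M}^{(1,\mathbf{n})}(Z^+_Q)$ onto its fixed locus) gives $\chi\bigl(\mr{M}^{(1,\mathbf{n})}(Z^+_Q)\bigr)=\bigl|\mr{M}^{(1,\mathbf{n})}(Z^+_Q)^T\bigr|$. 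Combined with Equation~\eqref{eq_6} this reduces $\Omega^+_Q(1,\mathbf{n})$ to a signed count of the tuples $f$ in Equation~\eqref{eq_55}.

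Next, observe that the defining constraint $\sum_p f(j,k,p)=\mathbf{n}_{j,k}$ decouples across $(j,k)\in I$, so the fixed-point count factorises as a product over $(j,k)\in I$ in which the $(j,k)$-factor counts the weak compositions of $\mathbf{n}_{j,k}$ into a tuple indexed by $p=1,\dots,k$. Using $(-1)^{d(\mathbf{n})}=\prod_{(j,k)}((-1)^k)^{\mathbf{n}_{j,k}}$ from the definition $d(\mathbf{n})=\sum_{(j,k)}k\,\mathbf{n}_{j,k}$, the generating function $\sum_{\mathbf{n}}\Omega^+_Q(1,\mathbf{n})\mathbf{q}^{\mathbf{n}}$ splits variable-by-variable over $(j,k)\in I$. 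Each inner sum is then evaluated by the negative-binomial series in the variable $x=(-1)^k\mathbf{q}_{j,k}$ to obtain the asserted product.

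The main technical obstacle is justifying $\chi(\mr{M})=|\mr{M}^T|$ on a smooth but a priori non-proper variety. Concretely, one must verify that a generic cocharacter of $T$ produces well-defined limits $\lim_{t\to 0}\lambda(t)\cdot V$ for every $V\in\mr{M}^{(1,\mathbf{n})}(Z^+_Q)$; this yields a Bialynicki-Birula stratification into affine cells fibered over the $T$-fixed points, from which the desired equality follows. Existence of these limits can be extracted from the description of $Z^+_Q$-semistable modules as cyclic modules generated by $v_0\in V_0$, which provides explicit normal forms compatible with the $T$-action.
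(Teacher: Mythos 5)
Your argument follows the paper's proof exactly: the paper's own proof of this theorem consists precisely of citing Equation \eqref{eq_6}, Equation \eqref{eq_55} and the equality $\chi\bigl(\mr{M}^{(1,\mathbf{n})}(Z^+_Q)\bigr)=\bigl|\mr{M}^{(1,\mathbf{n})}(Z^+_Q)^T\bigr|$, leaving both the localization and the combinatorics implicit. Your Bialynicki--Birula justification of the Euler characteristic count on the non-proper smooth variety, using cyclicity over the vertex $0$ to produce explicit limits, supplies a detail the paper omits and is the right way to make that step rigorous.

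However, your final step does not land on the displayed formula. For fixed $(j,k)$ the constraint $\sum_{p=1}^{k}f(j,k,p)=\mathbf{n}_{j,k}$ has $\binom{\mathbf{n}_{j,k}+k-1}{k-1}$ solutions (weak compositions into $k$ parts), so the negative-binomial series you invoke gives
\[
\sum_{m\geq 0}\binom{m+k-1}{k-1}\bigl((-1)^k\mathbf{q}_{j,k}\bigr)^m=\bigl(1-(-1)^k\mathbf{q}_{j,k}\bigr)^{-k},
\]
i.e.\ exponent $-k$ rather than the exponent $-1$ printed in the Theorem. A sanity check: for $\mathbf{n}=e_{1,2}$ the moduli space is $\CP^1\times\C$, with Euler characteristic $2$, matching the coefficient of $\mathbf{q}_{1,2}$ in $(1-\mathbf{q}_{1,2})^{-2}$ but not in $(1-\mathbf{q}_{1,2})^{-1}$. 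The exponent $-k$ is also what the subsequent Corollary requires, since $M(-q)^{\chi}=\prod_{k\geq1}(1-(-q)^k)^{-k\chi}$. So the printed statement contains a misprint and your computation in fact proves the corrected version; you should state which version you are proving rather than asserting that the inner sums evaluate ``to the asserted product'', which as written is false for $k>1$.
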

\begin{proof}
The claim follows from Equation \eqref{eq_6}, \eqref{eq_55} and 
\[
\chi\bigl(\mr{M}^{(1,\mathbf{n})}(Z^+_Q)\bigr)=\bigl|\mr{M}^{(1,\mathbf{n})}(Z^+_Q)^T\bigr|.
\]
\end{proof}
\begin{cor}
\[
\sum_{\mathbf{n}} \Omega^+_Q(1,\mathbf{n})\cdot \mathbf{q}^{\mathbf{n}}\big|_{\mathbf{q}_{j,k}=q^k}=M(-q)^{\chi}.
\]
\end{cor}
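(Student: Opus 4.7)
The plan is to deduce the corollary as a direct specialization of the preceding theorem. First, I would substitute $\mathbf{q}_{j,k} = q^k$ into the right-hand side of that theorem and use the elementary identity $(-1)^k q^k = (-q)^k$ to rewrite each factor as a power of $1-(-q)^k$.

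Second, I would exploit the Cartesian product structure of the index set $I = \{(j,k) \mid 1\le j\le |\chi|,\ 1\le k\le N\}$: grouping the product by fixing $k$ and letting $j$ range over its $|\chi|$ values, the identical factors for each fixed $k$ collapse to give
\[
\prod_{k=1}^{N}(1-(-q)^k)^{-k|\chi|},
\]
where the factor $k$ in the exponent tracks the $k$ arrows $a_{j,k,p}$ from the vertex $0$ to each vertex $(j,k)$.

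Third, I would compare this with the MacMahon function $M(-q)^{\chi} = \prod_{k\ge 1}(1-(-q)^k)^{-k\chi(X)}$: for $\chi > 0$ the two expressions coincide termwise in the range $k\le N$, which is exactly the relevant range since the admissible dimension vectors $\mathbf{n}$ are supported on $I$ and the tail $k > N$ only contributes to degrees of $q$ above $N$. The case $\chi < 0$ is handled in parallel, with the altered quiver structure (absence of the loops $b_{j,k}$) and the identification of $|\chi|$ with $-\chi$ combining to yield the correct overall sign.

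No serious obstacle is anticipated: the entire derivation is a substitution of variables followed by a rearrangement of a formal product. The only bookkeeping concerns are the consistent treatment of $|\chi|$ versus $\chi$ in the two sign regimes for the Euler characteristic, and the implicit truncation of the identity to powers of $q$ of order at most $N$, which parallels the role of the parameter $N$ throughout the paper.
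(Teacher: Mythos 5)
The paper states this corollary without proof---it is meant to be an immediate specialization of the preceding theorem---and your overall route (substitute $\mathbf{q}_{j,k}=q^k$, collapse the product over $j$, compare with $M(-q)^{\chi}$ modulo truncation at order $N$) is the intended one, including your correct observation that the factors with $k>N$ only affect coefficients beyond $q^N$. However, your grouped product $\prod_{k\le N}(1-(-q)^k)^{-k|\chi|}$ does not follow from the theorem as printed, whose factors are $(1-(-1)^k\mathbf{q}_{j,k})^{-1}$: substituting literally yields $\prod_{k\le N}(1-(-q)^k)^{-|\chi|}$, which is not $M(-q)^{\chi}$. The exponent $-k$ you use is in fact the correct one---the fixed-point set \eqref{eq_55} over a fixed vertex $(j,k)$ consists of the $\binom{\mathbf{n}_{j,k}+k-1}{k-1}$ weak compositions of $\mathbf{n}_{j,k}$ into $k$ parts, whose generating function is $(1-x)^{-k}$, so the theorem should read $(1-(-1)^k\mathbf{q}_{j,k})^{-k}$. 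You have silently corrected this; the correction should be made explicit rather than attributed vaguely to ``the $k$ arrows tracking the exponent''.

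The more substantive gap is the claim that $\chi<0$ is ``handled in parallel''. The whole subsection---Equation \eqref{eq_6}, the parametrization of fixed points by the modules $V(f)$ (which is built from the loops $b_{j,k}$, absent from $Q_{\chi,N}$ when $\chi<0$), and hence the theorem you are specializing---is stated only for $\chi=1$ and extends verbatim at best to $\chi>0$. For $\chi<0$ the torus-fixed locus is a genuinely different object: cyclicity forces each $V_{j,k}$ to be a quotient of the span of the $a_{j,k,p}(v_0)$, so one counts $\binom{k}{\mathbf{n}_{j,k}}$ Grassmannian fixed points with a different dimension formula controlling the Behrend sign, and the per-factor exponent comes out as $+k$ rather than $-k$. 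The signs do conspire to give $(1-(-q)^k)^{-k\chi}$ again, but this is a separate moduli computation, not a specialization of the stated theorem, so as written that part of your argument does not go through.
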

\begin{rem}
This is compatible with Corollary \ref{cor_main} and the formula for (rank one) degree zero DT invariants (\cite{behrend-fantechi,levine_pandharipande,li_0dimDT}):
\[
\sum_n \Omega^+_X(1,n)\cdot q^n = M(-q)^{\chi}.
\]
\end{rem}

\subsubsection{Counting invariants for the one loop quiver}
Let ${Q^\circ}$ be a quiver with a single vertex and a single loop. 
We want to compute the DT type invariants $\Omega_{Q^\circ}(n)$ for this quiver. 
First, we compute the pair invariants in the sense of \cite{joyce-song}.
Let $\mca{M}^{n,1}_{\mr{fr},{Q^\circ}}$ be the moduli scheme of pairs $(V,v)$, where $V$ is a $n$-dimensional ${Q^\circ}$-module and $v\in V$ is an element such that $\C {Q^\circ}\cdot v=V$. 
We define the pair invariant by
\[
\mr{NDT}^{n,1}_{Q^\circ}:=\chi(\mca{M}^{n,1}_{\mr{fr},{Q^\circ}},\nu)
\]
where $\nu$ is the Behrend function on $\mca{M}^{n,1}_{\mr{fr},{Q^\circ}}$.
The moduli scheme $\mca{M}^{n,1}_{\mr{fr},{Q^\circ}}$ coincides with the Hilbert scheme $\mr{Hilb}^n(\C)$ of $n$-points on $\C$ and we have
\[
\mr{Hilb}^n(\C)=\mr{Sym}^n(\C)\simeq \C^n.
\]
Hence we have $\mr{NDT}^{n,1}_{Q^\circ}=(-1)^n$.

Applying \cite[Corollary 7.23]{joyce-song}, we get
\[
1+\sum_{n>0}\mr{NDT}^{n,1}_{Q^\circ}\cdot q^n=\mr{exp}\Bigl(-\sum_{n> 0}(-1)^n\cdot n\cdot\bar{\mr{DT}}_{Q^\circ}(n)\cdot q^n\Bigr).
\]
Hence we have the following:
\begin{lem}\label{lem_app}
\[
\bar{\mr{DT}}_{Q^\circ}(n)=-1/n^2,
\quad\quad
\Omega_{Q^\circ}(n)=\begin{cases}
-1, & n=1,\\
0, & \text{otherwise}.
\end{cases}
\]
\end{lem}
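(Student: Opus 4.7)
The plan is to treat the displayed Joyce-Song identity as a functional equation and solve it in two steps: first extract $\bar{\mr{DT}}_{Q^\circ}(n)$ by a power-series computation, then extract $\Omega_{Q^\circ}(n)$ from the multiple-cover formula by a simple induction on divisors.

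For the first step, I would substitute $\mr{NDT}^{n,1}_{Q^\circ}=(-1)^n$ into the left-hand side of the Joyce-Song pair/BPS relation so that the generating series becomes the geometric series
\[
1+\sum_{n>0}(-1)^n q^n=\frac{1}{1+q}.
\]
Taking logarithms and using the Mercator expansion gives
\[
\log\!\left(\tfrac{1}{1+q}\right)=-\log(1+q)=\sum_{n>0}\frac{(-1)^n}{n}\,q^n.
\]
Comparing this with $-\sum_{n>0}(-1)^n\,n\,\bar{\mr{DT}}_{Q^\circ}(n)\,q^n$ coefficient by coefficient yields $-n\,\bar{\mr{DT}}_{Q^\circ}(n)=1/n$, hence $\bar{\mr{DT}}_{Q^\circ}(n)=-1/n^2$.

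For the second step, I would apply the defining relation
\[
\bar{\mr{DT}}_{Q^\circ}(n)=\sum_{m\mid n}\Omega_{Q^\circ}(n/m)/m^2
\]
and check the proposed formula directly: if $\Omega_{Q^\circ}(1)=-1$ and $\Omega_{Q^\circ}(k)=0$ for $k>1$, then the only surviving term on the right is $m=n$, giving $\Omega_{Q^\circ}(1)/n^2=-1/n^2$, which matches the value computed in the first step. Uniqueness of the inversion (Möbius inversion on the divisor poset) then forces these to be the only $\Omega$'s.

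There is essentially no obstacle: the first step depends only on recognizing the geometric series and expanding a logarithm, and the second step is just a verification against a telescoping divisor sum. The only subtle point worth a sentence is that applying \cite[Corollary 7.23]{joyce-song} requires the pair-invariant formalism to be valid for $Q^\circ$, which is already granted by the standard setup for quivers without relations used throughout the paper.
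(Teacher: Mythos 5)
Your proposal is correct and follows essentially the same route as the paper: the paper establishes $\mr{NDT}^{n,1}_{Q^\circ}=(-1)^n$ via $\mr{Hilb}^n(\C)\simeq\C^n$, invokes the same Joyce--Song identity, and leaves the logarithm expansion and the divisor-sum verification implicit in its ``hence''; you have simply written out those two elementary steps. The only point worth noting is that your argument (like the paper's) takes the pair-invariant computation and the applicability of \cite[Corollary 7.23]{joyce-song} as given, which is exactly how the paper structures it.
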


\section{D$0$-D$2$-D$6$ states for small crepant resolutions}\label{sec_026}
\subsection{General statement}\label{subsec_gen}
The argument in \S \ref{sec_06} can be directly generalized as follows.

Let $\mca{A}$ be an Abelian category on which Joyce-Song's theory (including the definition of the invariants and the wall-crossing formula) works. 
Assume that there exists a stability condition $Z^\circ\colon \Gamma_{\mca{A}}\to \C$ which is generic in the sense of \cite[Theorem 7.28]{joyce-song} and satisfies the following conditions:
\begin{itemize}
\item $\Omega^{Z^\circ}_{\mca{A}}(\gamma)\in \Z$ for any $\gamma\in \Gamma$, 
\item if $\chi(\gamma,\gamma')>0$ for $\gamma,\gamma'\in\Gamma$ then $\mr{arg}{Z^\circ}(\gamma)<\mr{arg}{Z^\circ}(\gamma')$, and 
\item for any $\gamma\in\Gamma$, let $\Gamma_\gamma$ be the subset of $\Gamma$ whose element $\mu\in \Gamma_\gamma$ satisfies the following conditions:
\begin{itemize}
\item $\Omega^{Z^\circ}_{\mca{A}}(\mu)\neq 0$, 
\item there exist an integer $l\geq 0$ and a sequence $\mu_1,\ldots,\mu_l\in \Gamma$ such that $\mu+\sum\mu_i=\gamma$ and $\Omega^{Z^\circ}_{\mca{A}}(\mu_i)\neq 0$ ($1\leq i\leq l$).
\end{itemize}
Then $\Gamma_\gamma$ is a finite set.
\end{itemize}
For $\gamma\in\Gamma$, we set 
\[
I_\gamma:=
\{
(\mu,j)\mid \mu\in\Gamma_\gamma,\,1\leq j\leq |\Omega_{\mca{A}}^{Z^\circ}(\mu)|
\}.
\]
Let $Q=Q_\gamma$ be the quiver with its vertex set $I_\gamma$ and with
\begin{itemize}
\item $\chi(\mu,\mu')$ arrows from the vertex $(\mu,j)$ to the vertex $(\mu',j')$ if $\chi(\mu,\mu')>0$, and 
\item a loop from the vertex $(\mu,j)$ to itself if $\Omega_{\mca{A}}^{Z^\circ}(\mu)<0$.
\end{itemize}
We call $Q_\gamma$ as the {\it BPS quiver} of type $\gamma$ for $\mca{A}$. 
Let $\pi\colon \Gamma_{Q_\gamma}\to \Gamma_{\mca{A}}$ be the group homomorphism given by 
\[
\pi(e_{(\mu,j)})=\mu
\]
and let $Z_{\mca{A}}\colon \Gamma_{\mca{A}}\to \C$ be a generic stability parameter. We put $Z_{Q_\gamma}:=Z_{\mca{A}}\circ \pi$.
\begin{thm}
\[
\Omega_{\mca{A}}^{Z_{\mca{A}}}(\gamma)=\sum_{\lambda\in\pi^{-1}(\gamma)}\Omega^{Z_{Q_\gamma}}_{Q_\gamma}(\lambda).
\]
In particular, $\Omega^{Z_{\mca{A}}}_{\mca{A}}(\gamma)\in \Z$.
\end{thm}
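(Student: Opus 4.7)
The plan is to mimic the proof of Theorem \ref{thm_main} almost verbatim, with $\mca{A}$ playing the role of $\mca{A}_X$, the stability condition $Z^\circ$ playing the role of $Z_X^-$, and $Z_{\mca{A}}$ playing the role of $Z_X^+$. The BPS quiver $Q_\gamma$ is constructed precisely so that its Euler form and initial DT data match those of $\mca{A}$ at $Z^\circ$, after applying $\pi$. So the proof reduces to checking three compatibilities and then running the wall-crossing formula.

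First I would introduce a ``reference'' stability condition $Z^-_{Q_\gamma}\colon \Gamma_{Q_\gamma}\to\C$ chosen so that the phases of its values on the simples $e_{(\mu,j)}$ agree with $\mr{arg}\,Z^\circ(\mu)$ and so that $Z^-_{Q_\gamma}$ is generic in the sense of \cite[Theorem 7.28]{joyce-song}. The analogue of Lemma \ref{lem_12} then holds: thanks to the assumption $\chi(\mu,\mu')>0\Rightarrow \mr{arg}\,Z^\circ(\mu)<\mr{arg}\,Z^\circ(\mu')$, the arrows of $Q_\gamma$ always go from a lower-phase to a higher-phase vertex, so every $Z^-_{Q_\gamma}$-semistable module is supported on a single vertex, hence is (for an indecomposable) either a simple at a no-loop vertex or a module over the one-loop quiver $Q^\circ$ at a loop vertex. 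Combining this with Lemma \ref{lem_app} and the direct computation at no-loop vertices, the analogue of Lemma \ref{lem_Q-} reads
\[
\Omega^{Z^-_{Q_\gamma}}_{Q_\gamma}(\beta)=\begin{cases}
+1,& \beta=e_{(\mu,j)}\text{ with }\Omega^{Z^\circ}_{\mca{A}}(\mu)>0,\\
-1,& \beta=n\cdot e_{(\mu,j)}\ (n>0),\ \Omega^{Z^\circ}_{\mca{A}}(\mu)<0,\\
0,& \text{otherwise}.
\end{cases}
\]
Summing over $j$ and over preimages under $\pi$ gives the analogue of Lemma \ref{lem_34}(3):
\[
\Omega^{Z^\circ}_{\mca{A}}(\mu)=\sum_{\beta\in\pi^{-1}(\mu)}\Omega^{Z^-_{Q_\gamma}}_{Q_\gamma}(\beta),
\]
which in turn yields the $\bar{\mr{DT}}$-version, just as Corollary \ref{cor_32} was deduced.

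Next I would verify the two compatibilities that make the wall-crossing coefficients match: (a) $\langle-,-\rangle_{Q_\gamma}=\langle\pi(-),\pi(-)\rangle_{\mca{A}}$, which is immediate from the definition of $Q_\gamma$ (the number of arrows from $(\mu,j)$ to $(\mu',j')$ was chosen to be $\max(\chi(\mu,\mu'),0)$, and the loops contribute the missing $\chi(\mu,\mu)$); and (b) $Z_{Q_\gamma}=Z_{\mca{A}}\circ\pi$ by definition. By the defining formula \cite[Equation (36)]{joyce-song}, these give
\[
U(\vec{\beta};Z^-_{Q_\gamma},Z_{Q_\gamma})=U(\pi(\vec{\beta});Z^\circ,Z_{\mca{A}}),
\]
exactly as in Lemma \ref{lem_33}. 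The finiteness assumption on $\Gamma_\gamma$ guarantees that all the sums appearing are finite, so that no convergence issues arise and Joyce-Song's formula applies to $(Z^\circ,Z_{\mca{A}})$ on $\mca{A}$ for $\gamma$ and to $(Z^-_{Q_\gamma},Z_{Q_\gamma})$ on $\mr{mod}\,Q_\gamma$ for every $\lambda\in\pi^{-1}(\gamma)$.

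At this stage the proof of Theorem \ref{thm_main} copies over line-by-line: apply the wall-crossing formula \eqref{eq_5} to $\bar{\mr{DT}}^{Z_{\mca{A}}}_{\mca{A}}(\gamma)$, substitute the initial data using the $\bar{\mr{DT}}$-analogue of Corollary \ref{cor_32}, rewrite the Euler pairings and the $U$-coefficients using (a), (b) and the analogue of Lemma \ref{lem_33}, swap the order of summation to pull the sum over $\lambda\in\pi^{-1}(\gamma)$ outside, and reassemble using \eqref{eq_5} on $Q_\gamma$ to obtain $\bar{\mr{DT}}^{Z_{Q_\gamma}}_{Q_\gamma}(\lambda)$. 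Then passing from $\bar{\mr{DT}}$ to $\Omega$ by M\"obius inversion (as in Corollary \ref{cor_main}) yields the stated identity. Finally, since $Q_\gamma$ has no relations and $Z_{Q_\gamma}$ is generic, \cite[Theorem 7.28]{joyce-song} gives $\Omega^{Z_{Q_\gamma}}_{Q_\gamma}(\lambda)\in\Z$, hence $\Omega^{Z_{\mca{A}}}_{\mca{A}}(\gamma)\in\Z$. The only real subtlety — and the main thing to check carefully — is genericity of $Z^-_{Q_\gamma}$ together with the existence of a perturbation achieving the prescribed phases on all simples; the finiteness of $\Gamma_\gamma$ makes such a choice possible by a standard small-angle perturbation argument.
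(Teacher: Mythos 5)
Your overall strategy is exactly the one the paper intends: the paper offers no separate proof of this theorem, saying only that the argument of \S\ref{sec_06} ``can be directly generalized'', and your proposal is precisely that generalization, with $Z^-_{Q_\gamma}:=Z^\circ\circ\pi$ playing the role of $Z_Q^-$, the phase hypothesis on $Z^\circ$ supplying the analogue of Lemma \ref{lem_12}, and the three compatibilities (Euler forms, central charges, initial data) feeding into the wall-crossing computation exactly as in Theorem \ref{thm_main} and Corollaries \ref{cor_32}, \ref{cor_main}. (Incidentally, no perturbation argument is needed for $Z^-_{Q_\gamma}$: taking it to be $Z^\circ\circ\pi$ on the nose, its genericity is inherited from that of $Z^\circ$ via the Euler-form compatibility.)

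There is, however, one concrete error in your displayed analogue of Lemma \ref{lem_Q-}. By Lemma \ref{lem_app} the one-loop quiver has $\Omega_{Q^\circ}(n)=-1$ only for $n=1$ and $\Omega_{Q^\circ}(n)=0$ for $n\geq 2$, so at a loop vertex the value $-1$ occurs only for $\beta=e_{(\mu,j)}$, not for all $\beta=n\cdot e_{(\mu,j)}$ with $n>0$. (The same slip appears in the paper's Lemma \ref{lem_Q-}, where it must be read as a typo: taken literally it already contradicts Lemma \ref{lem_34}(3), since for instance $2e_{j,1}$ and $e_{j,2}$ both lie in $\pi^{-1}(0,2)$ and would together give $-2\chi$ rather than $-\chi$.) In your general setting the uncorrected formula genuinely breaks the very next identity you assert, $\Omega^{Z^\circ}_{\mca{A}}(\mu)=\sum_{\beta\in\pi^{-1}(\mu)}\Omega^{Z^-_{Q_\gamma}}_{Q_\gamma}(\beta)$: whenever $\Gamma_\gamma$ contains both $\mu'$ and $n\mu'$ for some $n\geq 2$ with negative invariants --- as happens already for the conifold, where $(0,1,1)$ and $(0,2,2)$ both carry $\Omega^{Z^\circ}=-2$ by Lemma \ref{lem_coni} --- the classes $n\cdot e_{(\mu',j)}$ would contribute spurious $-1$'s to the fibre over $n\mu'$. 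With the correction (the $-1$ only at $n=1$), the only classes in $\pi^{-1}(\mu)$ with nonzero invariant are the $|\Omega^{Z^\circ}_{\mca{A}}(\mu)|$ simples $e_{(\mu,j)}$, each contributing $\mr{sgn}(\Omega^{Z^\circ}_{\mca{A}}(\mu))$, the identity holds, and the rest of your argument goes through unchanged.
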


\subsection{BPS quiver for the conifold}\label{subsec_BPS_conifold}
\subsubsection{}
Let $\hat{Q}=\hat{Q}_{\mr{conifold}}$ be the quiver in Figure \ref{pic2} and $\omega$ be the following superpotential: 
\[
\omega=a_1b_1a_2b_2-a_1b_2a_2b_1.
\]
\begin{figure}[htbp]
  \centering
  \input{pic2.tpc}
  \caption{The quiver $\hat{Q}_{\mr{conifold}}$}
\label{pic2}
\end{figure}
Let $\mathrm{mod}(\hat{Q},w)$ be the category of finite dimensional modules over the quiver with relations $\C\hat{Q}/\hat{I}_w$ where $\hat{I}_w$ is the two-sided ideal generated by the derivatives of $w$. 
For a stability condition $Z\colon \Gamma_{\hat{Q}}:=\Z^3\to\C$ on $\mathrm{mod}(\hat{Q},w)$ and a dimension vector $\mathbf{v}=(v_\infty,v_0,v_1)\in \Gamma_{\hat{Q}}$, we can define the DT type invariant $\Omega^Z_{\hat{Q},w}(\mathbf{v})$ (\cite[\S 7]{joyce-song}).

Take a stability condition $Z^\circ\colon \Gamma_{\hat{Q}}\to\C$ such that 
\[
\mr{arg}(Z^\circ(e_\infty))<\mr{arg}(Z^\circ(e_0))=\mr{arg}(Z^\circ(e_1)).
\]
\begin{lem}\label{lem_coni}
\[
\Omega^{Z^\circ}_{\hat{Q},w}(v_\infty,v_0,v_1)=
\begin{cases}
1, & (v_\infty,v_0,v_1)=(1,0,0),\\
-2, & (v_\infty,v_0,v_1)=(0,n,n)\ (n> 0),\\
1, & (v_\infty,v_0,v_1)=(0,n,n+1)\text{ or }(0,n+1,n)\ (n\geq 0),\\
0, & \text{otherwise}.
\end{cases}
\]
\end{lem}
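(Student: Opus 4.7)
The plan is to stratify by the value of $v_\infty$ and analyse each stratum directly.

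For $v_\infty \geq 1$ with $(v_0, v_1) \neq (0, 0)$: any such module $V$ admits the subrepresentation $V' := (0, V_0, V_1)$, obtained by restricting the arrows $a_i, b_j$. Because no arrow of $\hat{Q}$ enters the vertex $\infty$, the triple $V'$ is automatically closed under every arrow, hence is a genuine $(\hat{Q}, w)$-submodule. Since $\phi(V') = \mr{arg}\, Z^\circ(e_0) > \phi(V)$, the module $V$ is $Z^\circ$-unstable, so $\Omega^{Z^\circ}_{\hat{Q}, w}(v_\infty, v_0, v_1) = 0$ throughout this range.

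For the pure $\infty$-strata $(v_\infty, 0, 0)$: the full subcategory of modules supported at the vertex $\infty$ has no arrows or loops, and is therefore equivalent to finite-dimensional vector spaces. Its Joyce--Song BPS theory is the elementary no-loop analogue of Lemma \ref{lem_app}: $\Omega = 1$ in dimension $1$ and $\Omega = 0$ in dimension $\geq 2$. This settles $(1, 0, 0)$ and $(v_\infty, 0, 0)$ for $v_\infty \geq 2$.

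The substantive case is $v_\infty = 0$. Since $\mr{arg}\, Z^\circ(e_0) = \mr{arg}\, Z^\circ(e_1)$, every module over the conifold subquiver with superpotential $w$ is $Z^\circ$-semistable, and the problem reduces to computing the Joyce--Song BPS invariants on this noncommutative crepant resolution of the conifold. I would follow the MacMahon-style argument of the appendix of \S\ref{sec_06}: use the natural torus action on $\mca{M}_{\hat{Q}, w}^{(0, v_0, v_1)}$ to compute its weighted Euler characteristic by fixed-point localization, and then extract the BPS series via the Joyce--Song multi-cover formula. The diagonal classes $(n, n)$ produce the MacMahon factor $M(-q)^{\chi(X)}$ with $\chi(X) = 2$, yielding $\Omega = -2$; the shifted classes $(n, n+1)$ and $(n+1, n)$ each contribute a factor corresponding to the unique rigid rational curve in the resolved conifold, giving $\Omega = 1$, and the $v_0 \leftrightarrow v_1$ symmetry is the Seiberg-duality/flop analogue of the reflection argument used in the proof of Proposition \ref{prop_dual}; classes with $|v_0 - v_1| \geq 2$ vanish because no effective curve class of higher degree exists on the conifold.

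The main obstacle is controlling the BPS invariants in the $v_\infty = 0$ case at the wall, since $Z^\circ$ is non-generic between the vertices $\{0, 1\}$. The cleanest route is to perturb $Z^\circ$ to a nearby generic stability on either side of the wall, compute the BPS invariants there (where the semistables admit explicit quiver-theoretic descriptions), and then return to $Z^\circ$ via the wall-crossing formula, verifying consistency using the $v_0 \leftrightarrow v_1$ symmetry enforced by the wall.
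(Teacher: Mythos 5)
The stratification by $v_\infty$ is sound, and your treatment of the strata with $v_\infty\geq 1$ is correct and complete: for $v_\infty\geq 1$ and $(v_0,v_1)\neq(0,0)$ the sub-object $(0,V_0,V_1)$ does destabilize (and every class on the ray of $Z^\circ(v_\infty,v_0,v_1)$ has the same shape, so $\varepsilon^\beta=0$, not just $\delta^\beta=0$), and the pure $\infty$-stratum is the no-loop analogue of Lemma \ref{lem_app}. These cases, however, only account for the entries $1$ at $(1,0,0)$ and $0$ for $v_\infty\geq 1$.

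The genuine gap is the stratum $v_\infty=0$, which is where all the nontrivial values $-2$ and $1$ live, and there your argument does not go through as written. First, the localization scheme of the appendix to \S\ref{sec_06} is not available here: that computation works because the framing dimension is $1$, so the moduli stack is a scheme modulo a free $\C^*$-action and $\bar{\mr{DT}}$ really is a signed Euler characteristic. For $(0,v_0,v_1)$ every module is $Z^\circ$-semistable, the moduli is a genuine Artin stack $[\mr{Rep}/GL_{v_0}\times GL_{v_1}]$, and $\bar{\mr{DT}}$ is defined through the Hall-algebra logarithm $\varepsilon$, not as a naive weighted Euler characteristic of the stack; a torus fixed-point count on the representation space does not compute it. Second, the statements that the diagonal classes ``produce the MacMahon factor with $\chi=2$,'' that $(n,n\pm1)$ ``each contribute a factor corresponding to the unique rigid rational curve,'' and that $|v_0-v_1|\geq 2$ vanishes ``because no effective curve class of higher degree exists'' are precisely the content to be proved: they presuppose the identification of the semistable quiver modules with sheaves on the resolved conifold and the values of the resulting invariants. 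This is exactly what the paper imports by citing \cite[Theorem 3.5, Lemma 3.7]{nagao-nakajima}, where the $Z^\circ$-(semi)stable modules are classified (a unique rigid stable module for each real root $(0,n,n\pm1)$, and semistables for $(0,n,n)$ matching zero-dimensional sheaves). Without either that classification or an equivalent input (e.g.\ Szendr\H{o}i's noncommutative DT computation plus wall-crossing), the values $-2$ and $1$ are asserted rather than derived. Finally, the concluding worry about non-genericity is unfounded and the proposed perturbation does not help: the conifold quiver is symmetric, so the antisymmetrized Euler pairing vanishes identically on the sublattice $v_\infty=0$, and $Z^\circ$ is already generic in the sense of \cite[Theorem 7.28]{joyce-song}; no wall-crossing to a nearby stability condition is needed, and performing one would still leave the same invariants to be computed.
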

\begin{proof}
This follows from \cite[Theorem 3.5]{nagao-nakajima} and \cite[Lemma 3.7]{nagao-nakajima}. 
\end{proof}
\begin{thm}
The DT type invariant $\Omega^Z_{\hat{Q},w}(\mathbf{v})$ is an integer for any generic stability condition $Z$ and for any $\mathbf{v}\in\Gamma_{\hat{Q}}$.
\end{thm}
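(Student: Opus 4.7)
The plan is to apply the general integrality theorem of \S\ref{subsec_gen} to $\mca{A}=\mr{mod}(\hat{Q},\omega)$ with the initial stability condition $Z^\circ$ fixed just before Lemma~\ref{lem_coni}. Three hypotheses must be checked: (i) $\Omega^{Z^\circ}_{\hat{Q},\omega}(\gamma)\in\Z$ for all $\gamma$, (ii) $\chi(\gamma,\gamma')>0$ implies $\mr{arg}\,Z^\circ(\gamma)<\mr{arg}\,Z^\circ(\gamma')$, and (iii) finiteness of each set $\Gamma_\gamma$. Hypothesis (i) is immediate from Lemma~\ref{lem_coni}, whose values lie in $\{0,1,-2\}$. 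For (iii), Lemma~\ref{lem_coni} restricts the classes with $\Omega^{Z^\circ}\ne 0$ to $(1,0,0)$ and $(0,v_0,v_1)$ with $v_0,v_1\geq 0$ and $|v_0-v_1|\leq 1$; since all entries of these classes are nonnegative, only finitely many decompositions of a given $\gamma$ into such summands can exist.

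The bulk of the work is hypothesis (ii). The Euler pairing of the CY3 category $\mr{mod}(\hat{Q},\omega)$, restricted to the sublattice $\{v_\infty=0\}$, is antisymmetric by the CY3 property of the conifold subquiver with potential, and a direct count using the two arrows $a_1,a_2\colon 0\to 1$, the two arrows $b_1,b_2\colon 1\to 0$, and the four derivative relations of $\omega$ shows that it in fact vanishes identically on this sublattice. Hence $\chi(\gamma,\gamma')=0$ whenever $v_\infty=v'_\infty=0$, so the compatibility condition is vacuous there---a crucial point, since Lemma~\ref{lem_coni} sets $\mr{arg}\,Z^\circ(e_0)=\mr{arg}\,Z^\circ(e_1)$. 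Among classes with $v_\infty\ne 0$, only $(1,0,0)$ has nonzero $\Omega^{Z^\circ}$, so the remaining pairings to consider are of the form $\chi(e_\infty,\mu)$ with $\mu=(0,v_0,v_1)$; the single arrow from $\infty$ to $0$ determines the sign, and the arrangement $\mr{arg}\,Z^\circ(e_\infty)<\mr{arg}\,Z^\circ(e_0)=\mr{arg}\,Z^\circ(e_1)$ in the definition of $Z^\circ$ delivers exactly the inequality required.

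With all three hypotheses in hand, the theorem of \S\ref{subsec_gen} yields, for every generic $Z$ and every $\mathbf{v}\in\Gamma_{\hat{Q}}$, a BPS quiver $Q_{\mathbf{v}}$ without relations together with a projection $\pi\colon\Gamma_{Q_{\mathbf{v}}}\to\Gamma_{\hat{Q}}$ such that
\[
\Omega^Z_{\hat{Q},\omega}(\mathbf{v})=\sum_{\lambda\in\pi^{-1}(\mathbf{v})}\Omega^{Z_{Q_{\mathbf{v}}}}_{Q_{\mathbf{v}}}(\lambda).
\]
This is a finite sum (by (iii)) of integers (by Theorem~\ref{thm_js_integrality} applied to $Q_{\mathbf{v}}$, which has no relations), completing the proof. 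The main obstacle is getting the sign right in hypothesis (ii): because $Z^\circ$ is only partially generic ($e_0$ and $e_1$ share a phase), one must use that the Euler pairing genuinely vanishes on the $\{v_\infty=0\}$ sublattice rather than merely changes sign, which requires the explicit computation using the conifold potential rather than just the underlying quiver.
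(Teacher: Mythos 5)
Your proposal is correct and follows exactly the paper's route: the paper's proof is the single sentence ``Using Lemma \ref{lem_coni}, we can check that $Z^\circ$ satisfies the conditions in \S \ref{subsec_gen},'' and you have simply carried out those checks explicitly (including the genuinely relevant point that the antisymmetrized Euler form vanishes on the $\{v_\infty=0\}$ sublattice, which is what makes $Z^\circ$ generic despite $e_0$ and $e_1$ sharing a phase).
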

\begin{proof}
Using Lemma \ref{lem_coni}, we can check that $Z^\circ$ satisfies the conditions in \S \ref{subsec_gen}.
\end{proof}
\begin{rem}
The BPS quiver $Q_\gamma$ \textup{(}for sufficiently large $\gamma$\textup{)} is given in Figure \ref{fig_3}.
\end{rem}
\begin{figure}[htbp]
  \centering
  \input{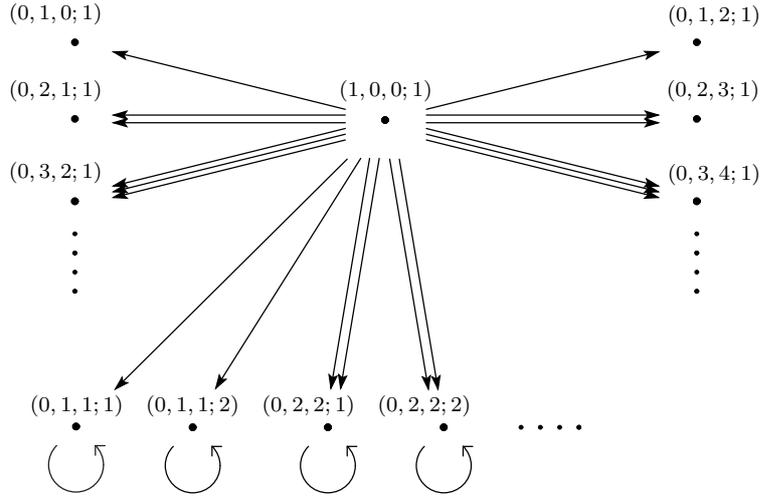}
  \caption{The BPS quiver for the conifold}
\label{fig_3}
\end{figure}

\subsubsection{}
Note that we have the special vertex $(1,0,0;1)\in I_\gamma$ in the BPS quiver $Q_\gamma$. 
We put $I^\circ_\gamma:=I_\gamma-\{(1,0,0;1)\}$ and take a subset $S\subset I^\circ_\gamma$.
Let 
\[
Z^S_{Q_\gamma}\colon \Gamma_{Q_\gamma}\to \C
\]
be a stability condition such that 
\[
\mathrm{arg}(Z^S_{Q_\gamma}(e_{(1,0,0;1)}))>\mathrm{arg}(Z^S_{Q_\gamma}(e_{s}))
\]
if $s\in S$ and 
\[
\mathrm{arg}(Z^S_{Q_\gamma}(e_{(1,0,0;1)}))<\mathrm{arg}(Z^S_{Q_\gamma}(e_{s}))
\]
if $s\in I^\circ_\gamma\backslash S$.
If $V_0\neq \emptyset$ and $V_s\neq \emptyset$ for some $s\in I^\circ_\gamma\backslash S$, then $V$ is not $Z_{Q_\gamma}^S$-semistable.

We construct the new quiver $Q_S$ by removing the vertices in $I^\circ_\gamma\backslash S$ and let $Z^+_{Q_S}\colon \Gamma_{Q_S}\to \C$ be the stability condition given by composing $Z^S_{Q_\gamma}$ and the natural injection $\iota\colon \Gamma_{Q_S}\to \Gamma_{Q_\gamma}$.
For any $\lambda\in \Gamma_{Q_S}$, we have 
\[
\Omega^{Z^+_{Q_S}}_{Q_S}(\lambda)=\Omega^{Z^S_{Q_\gamma}}_{Q_\gamma}(\iota(\lambda)).
\]
Applying the reflection functor of the quiver $Q_S$, we get the following symmetry:
\begin{thm}\label{thm_symm_coni}
For any generic stability condition $Z$, we have
\[
\Omega^Z_{\hat{Q},w}(v_\infty,v_0,v_1)=
\Omega^Z_{\hat{Q},w}(v_0-v_\infty,v_0,v_1).
\]
\end{thm}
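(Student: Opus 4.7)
The plan is to mimic the BGP reflection argument of Proposition \ref{prop_dual} applied to the BPS quiver $Q_\gamma$ from \S \ref{subsec_BPS_conifold}, using the machinery of \S \ref{subsec_gen} to pass between $\mr{mod}(\hat{Q},w)$ and $\mr{mod}Q_\gamma$. Fixing $\gamma=(v_\infty,v_0,v_1)$ (or a sufficiently large multiple), the main theorem of \S \ref{subsec_gen} together with its BPS analogue (obtained from the wall-crossing formula exactly as Corollary \ref{cor_main} was obtained from Theorem \ref{thm_main}) gives
\[
\Omega^Z_{\hat{Q},w}(v_\infty,v_0,v_1) = \sum_{\lambda \in \pi^{-1}(v_\infty,v_0,v_1)} \Omega^{Z_{Q_\gamma}}_{Q_\gamma}(\lambda),
\]
and since $(1,0,0;1)$ is the only vertex of $Q_\gamma$ whose $\pi$-image has nonzero $\infty$-component, every $\lambda$ in the sum satisfies $\lambda_{(1,0,0;1)}=v_\infty$.

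Next I would reduce to a sub-quiver in which the framing vertex is a sink (the source case is entirely analogous). For each $\lambda$ I take $S\subset I^\circ_\gamma$ to contain the support of $\lambda$ together with only those vertices $s$ with $\chi(\pi(s),(1,0,0))>0$, and use the stability condition $Z^S_{Q_\gamma}$ constructed in the subsection preceding this theorem. The identification
\[
\Omega^{Z^+_{Q_S}}_{Q_S}(\lambda) = \Omega^{Z^S_{Q_\gamma}}_{Q_\gamma}(\iota(\lambda))
\]
recorded there lets one work inside $Q_S$, in which $(1,0,0;1)$ is a sink by construction.

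Now run the reflection-functor argument of Proposition \ref{prop_dual} at this sink: for a $Z^+_{Q_S}$-semistable module $V$ the natural map
\[
g_V : \bigoplus_{(\mu,j)\in S} V_{(\mu,j)}^{\oplus \chi(\mu,(1,0,0))} \twoheadrightarrow V_{(1,0,0;1)}
\]
is surjective, and $\ker g_V$ yields a natural isomorphism between the moduli stack of $Z^+_{Q_S}$-semistable $Q_S$-modules of dimension $\lambda$ and the moduli stack for $Q_S^{\mr{op}}$ with $Z^-_{Q_S^{\mr{op}}}$-stability and framing dimension $N(\lambda)-v_\infty$, where
\[
N(\lambda) := \sum_{(\mu,j)\in S} \chi(\mu,(1,0,0))\cdot\lambda_{(\mu,j)} = \chi\bigl((0,v_0,v_1),(1,0,0)\bigr) = v_0,
\]
with the last two equalities coming from $\pi(\lambda)=(v_\infty,v_0,v_1)$ and the antisymmetric Euler form on $\mr{mod}(\hat{Q},w)$. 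Consequently every $\Omega^+_{Q_S}(\lambda)$ equals the corresponding invariant at a $\lambda'$ projecting to $(v_0-v_\infty,v_0,v_1)$; summing over $\lambda$ and reapplying the main theorem of \S \ref{subsec_gen} gives the claim.

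The technical heart, and expected main obstacle, is the bookkeeping verifying that the reflection at the sink $(1,0,0;1)$ in $Q_S$ produces exactly $v_0-v_\infty$ as the new framing dimension for every $\lambda$; this reduces to the Euler-form computation $\chi((0,v_0,v_1),(1,0,0))=v_0$ on the conifold together with the compatibility of arrow multiplicities in $Q_\gamma$ with this form, plus the surjectivity of $g_V$ for $Z^+_{Q_S}$-semistable $V$ (the dual of the injectivity of $f_V$ in Proposition \ref{prop_dual}).
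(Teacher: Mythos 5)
Your proposal follows the paper's own route: the paper likewise passes to the BPS quiver $Q_\gamma$, cuts down to the subquiver $Q_S$ via the stability condition $Z^S_{Q_\gamma}$, and applies the reflection functor at the framing vertex $(1,0,0;1)$ exactly as in Proposition \ref{prop_dual}, with your Euler-form computation $N(\lambda)=v_0$ being precisely the (unstated) reason the reflection lands on $(v_0-v_\infty,v_0,v_1)$. The only caveat is that $S$ should be read off from the given $Z$ (as the set of vertices whose phase lies below that of $e_{(1,0,0;1)}$) rather than chosen per $\lambda$; since the support of any contributing $\lambda$ automatically lies in that set, this does not affect the argument.
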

\begin{rem}
We can apply an analogue of the reflection functor directly for the quiver with the potential $(\hat{Q},w)$.
Using the isomorphism
\[
(\C Q/I_w)^{\mr{op}}\simeq \C Q/I_w,
\]
we get another proof of Theorem \ref{thm_symm_coni}.
This argument is quite similar to Toda's one (\cite[\S 1.5]{stoppa_D0D6}).
\end{rem}
\begin{NB}
We put 
\[
\tilde{V}_S:=
\bigoplus_{
\begin{subarray}{c}
(0,v_0,v_1;j)\in I^\circ_\gamma\backslash S,\\
1\leq p\leq v_0.
\end{subarray}
}
V_{\mr{v},j}
\]
and 
\[
f_{V,S}:=\bigoplus a_{v_0,v_1,j,p}\colon V_0\to \tilde{V}_S
\]
where $a_{v_0,v_1,j,p}$ ($1\leq p\leq v_0$) is an arrow in $Q_\gamma$ from the vertex $0$ to the vertex $(0,v_0,v_1;j)$.
\begin{lem}
\end{lem}
\end{NB}

\subsubsection{}
Let ${Q}={Q}_{\mr{conifold}}$ be the quiver in Figure \ref{pic1} and $\omega$ be the following superpotential: 
\[
\omega=a_1b_1a_2b_2-a_1b_2a_2b_1.
\]
\begin{figure}[htbp]
  \centering
  \input{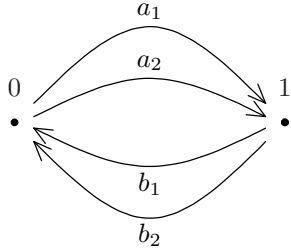}
  \caption{Non-commutative crepant resolution of the conifold}
\label{pic1}
\end{figure}
Let $P_0$ be the projective $\C Q/I_w$-module.
Note that we can canonically identify the following data:
\begin{itemize}
\item a finite dimensional $\C\hat{Q}/\hat{I}_w$-module, and
\item a triple $(W,V,s)$ of a finite dimensional vector space $W$, a finite dimensional $\C Q/I_w$-module $V$ and a homomorphism $s\colon P_0\otimes W\to V$.
\end{itemize}

Let $X=\mr{Tot}(\OO_{\mathbb{P}^1}(-1)\oplus\OO_{\mathbb{P}^1}(-1))$ be the crepant resolution of the conifold and $\pi\colon X\to \mathbb{P}^1$ be the projection.
The derived functor
\[
\R\Hom(\OO_X\oplus \pi^*\OO_{\mathbb{P}^1}(1),-)
\] 
gives an equivalence between the derived category of coherent sheaves on $X$ and the derived category of $\C/I_w$-modules, which maps the structure sheaf $\OO_X$ to the projective module $P_0$. 
We identify these two category. 

\begin{defn}
A {\it coherent system} on $X$ is a triple $(W,F,s)$ of a finite dimensional vector space $W$, a compactly supported coherent sheaf $F$ and a morphism $s\colon \OO_X \otimes W \to F$.
A coherent system $(W,F,s)$ is said to be stable if the following condition is satisfied:\begin{quote}
for any subspace $W'\subset W$ and subsheaf $F'\subset F$ such that $s(W'\otimes \OO_X)\subset F'$, we have
\begin{itemize}
\item $r(F)\cdot \dim V'< r(F')\cdot \dim V$, or
\item $r(F)\cdot \dim V'= r(F')\cdot \dim V$ and $\chi(F)\cdot \dim V'= \chi(F')\cdot \dim V$
\end{itemize}
where $r(F):=\chi(F\otimes \pi^*\OO_{\mathbb{P}^1}(1))-\chi(F)$.
\end{quote}
\end{defn}
\begin{rem}
A coherent system $(W,F,s)$ is stable, then we can check that $s$ is surjective.
\end{rem}
Given a numerical data $\gamma\in\Gamma_Q$, we take a sufficiently small $\varepsilon >0$ and let $Z_{X}=Z_{X,\gamma}\colon \Gamma_{\hat{Q}}\to C$ be a stability condition such that
\begin{align*}
\mr{arg}(Z_X(0,0,1))&>
\mr{arg}(Z_X(1,0,0))=\pi/2+\varepsilon\\
&>\mr{arg}(Z_X(0,1,1))=\pi/2>
\mr{arg}(Z_X(0,1,0)).
\end{align*}
\begin{prop}\label{prop_26}
Given a stable coherent system $(W,F,s)$ with $[F]=\gamma$, then $F\in \mr{mod}(Q,w)$ and the triple is $Z_X$-stable as a $\C\hat{Q}/\hat{I}_w$-module.
On the other hand, given a $Z_X$-stable $\C\hat{Q}/\hat{I}_w$-module $(W,V,s)$ with $[V]=\gamma$, then $V\in \mr{Coh}X$ and the triple is stable as a coherent system.
\end{prop}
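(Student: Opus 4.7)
The plan is to use the derived equivalence $\Phi := \R\Hom(\mca{T},-)$ with $\mca{T} := \OO_X \oplus \pi^*\OO_{\mathbb{P}^1}(1)$ to set up a dictionary between the two sides, and to verify that the chosen stability condition $Z_X$ matches both at the heart level and at the stability level in each direction. Under $\Phi$, a triple $(W,F,s)$ with $F \in D^b(\mr{Coh}X)$ transforms into a triple $(W, \Phi(F), \Phi(s))$, where $\Phi(s)\colon P_0 \otimes W \to \Phi(F)$ is a priori a morphism in the derived category of $(Q,w)$-modules; it becomes a genuine $\hat{Q}/\hat{I}_w$-module datum exactly when $\Phi(F)$ is concentrated in degree zero. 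Each direction of the proposition thus reduces to verifying (i) the heart-level statement (sheaf $\leftrightarrow$ module) and (ii) that subobjects on one side correspond to destabilizing data on the other under this dictionary.

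For the forward direction, stability of $(W,F,s)$ implies $F \in \mr{Coh}X$ is compactly supported and $s$ is surjective. The vanishing $\Ext^i(\mca{T},F)=0$ for $i \neq 0$ follows by pushing to $\mathbb{P}^1$: since $F$ is supported in dimension at most one over $\mathbb{P}^1$ and both $\OO_{\mathbb{P}^1}$ and $\OO_{\mathbb{P}^1}(1)$ have no higher cohomology, the projection formula delivers the vanishing. Hence $\Phi(F)$ is a $(Q,w)$-module. A $Z_X$-destabilizing submodule of the corresponding $\hat Q/\hat I_w$-module pulls back under $\Phi^{-1}$ to a subsystem $(W', F', s')$ with $F' \subset F$ a genuine subsheaf, and the phase inequality for $Z_X$ translates, via the numerical dictionary relating the dimension vector of $\Phi(F)$ to $(r(F), \chi(F))$, into exactly the coherent-system stability inequality stated in the definition.

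For the reverse direction, starting from a $Z_X$-stable $\hat Q/\hat I_w$-module $(W,V,s)$, the key step is to show that $\Phi^{-1}(V) \in \mr{Coh}X$, i.e.\ that its cohomology is concentrated in degree zero. The obstruction is a simple object of $(Q,w)$ whose image under $\Phi^{-1}$ is a shift $\OO_C(-1)[1]$; a nonzero $H^{-1}$ in $\Phi^{-1}(V)$ would have to arise from such pieces. The chosen phase ordering of $Z_X$ --- with $\mr{arg}(Z_X(0,1,0))<\pi/2$ and the framing placed just above the compactly supported heart at $\pi/2+\varepsilon$ --- is designed to rule this out: any nonzero $H^{-1}$ component would produce either a subobject or a quotient in $V$ whose phase violates $Z_X$-semistability. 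Once $\Phi^{-1}(V) = F$ is a genuine sheaf, the forward dictionary runs in reverse to give coherent-system stability. I expect the main obstacle to be this $H^{-1}$-vanishing step: it requires a careful filtration argument exploiting the precise phase geometry of $Z_X$ and, crucially, the role of the small parameter $\varepsilon>0$ in separating the framing chamber from the compactly-supported-sheaf chamber.
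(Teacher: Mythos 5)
Your overall strategy --- transporting the problem through the tilting equivalence $\R\Hom(\OO_X\oplus\pi^*\OO_{\mathbb{P}^1}(1),-)$ and matching both the hearts and the stability inequalities on the two sides --- is exactly the approach the paper intends: its entire proof is the single sentence that the claim ``can be checked in the same way as in \cite[Proposition 2.10]{nagao-nakajima}'', and that reference carries out precisely this tilting-plus-phase-comparison argument. So you have correctly identified the route and the place where the work happens.

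However, what you have written is a plan rather than a proof, and the gap sits exactly where you yourself locate it: the reverse direction's claim that a $Z_X$-stable module $(W,V,s)$ has $\Phi^{-1}(V)$ concentrated in degree zero is asserted to follow from ``a careful filtration argument exploiting the precise phase geometry'' but is never carried out. This is the genuinely nontrivial content of the proposition --- one must show that a nonzero $H^{-1}$ (built from objects of the form $\OO_C(a)[1]$ lying in the perverse heart) would produce a submodule or quotient module of $(W,V,s)$ violating the ordering $\mr{arg}(Z_X(0,0,1))>\mr{arg}(Z_X(1,0,0))>\mr{arg}(Z_X(0,1,1))>\mr{arg}(Z_X(0,1,0))$, and dually that every destabilizing submodule of the image of a stable coherent system comes from a genuine subsheaf --- and without it the proposition is not proved. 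A secondary imprecision occurs in the forward direction: you derive $\Ext^i(\mca{T},F)=0$ for $i\neq 0$ from the projection formula together with the vanishing of higher cohomology of $\OO_{\mathbb{P}^1}$ and $\OO_{\mathbb{P}^1}(1)$ alone, but this is false for an arbitrary compactly supported $F$ (for instance $F=\OO_C(-2)$ on the zero section has $H^1\neq 0$). The vanishing you need comes from the surjectivity of $s$, i.e.\ from $F$ being globally generated (so that $\pi_*F$ and $\pi_*F\otimes\OO_{\mathbb{P}^1}(1)$ are globally generated sheaves on $\mathbb{P}^1$ and hence have no $H^1$); you state the surjectivity but never feed it into the cohomology computation.
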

\begin{proof}
We can check the claim in the same way as in \cite[Proposition 2.10]{nagao-nakajima}. 
\end{proof}
\begin{rem}
The invariant which counts objects as in Proposition \ref{prop_26} can be described as a sum of counting invariants of the quiver in Figure \ref{fig_4}
\begin{figure}[htbp]
  \centering
  \input{quiver4.tpc}
  \caption{}
\label{fig_4}
\end{figure}
\end{rem}

\subsection{BPS quivers for small crepant resolutions}
The argument as in \S \ref{subsec_BPS_conifold} can be applied in the following examples as well. 
We can construct the BPS quivers using Lemma \ref{lem_small}, \ref{lem_mckay} and \ref{lem_(0,-2)}.

\subsubsection{Toric small crepant resolutions}
Let $L_\pm$ be non-negative integers.
We put $L:=L_++L_-$ and
\[
I:=\Z/L\Z=\{0,\ldots L-1\},\quad \tilde{I}:=\Z_\mr{h}/L\Z=\{1/2,\ldots,L-1/2\}
\]
where $\Z_{\mr{h}}$ is the set of half integers. 
We take a map
\[
\sigma \colon \tilde{I}\to \{\pm\}
\]
such that $L_\pm=|\sigma^{-1}(\pm)|$.

In \cite[\S 1.2]{3tcy}, we constructed a quiver with a potential $A_\sigma=(Q_\sigma,w_\sigma)$, which is a non-commutative crepant resolution of the affine Calabi-Yau $3$-fold 
\[
\{(X,Y,Z,W)\in\C^4\mid XY=Z^{L_+}W^{L_-}\}.
\] 
The set of vertices of the quiver $Q_\sigma$ is ${I}:=\Z/L\Z$.
Put
\[
{I}_\sigma:=\{k\in{I}\mid \sigma(k-1/2)=\sigma(k+1/2)\},
\]
then $Q_\sigma$ is given by adding a loop for each vertex in ${I}_\sigma$
to the double quiver of affine $A_{L-1}$ type . 
\begin{figure}[htbp]
  \centering
  \input{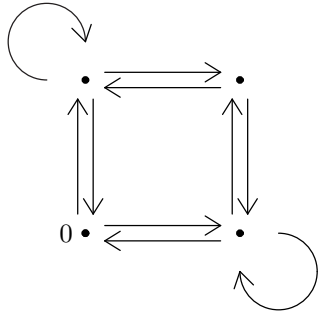}
  \caption{Quiver $Q_\sigma$ for $\sigma\colon 1/2,3/2,5/2,7/2\mapsto +,+,-,-$.}
\label{fig_6}
\end{figure}

We identify $\Z^{{I}}$ with the root lattice of affine Lie algebra of type $A_{L-1}$ and denote the set of positive root vectors (resp. positive real root vectors) by $\Lambda_+$ (resp. $\Lambda_+^{\mr{re}}$).
Let $\delta:=(1,\ldots,1)$ be the minimal imaginary root.
For $\alpha\in \Z^{{I}}$, we put 
\[
\varepsilon(\alpha):=\sum_{k\notin I_\sigma}\alpha_k.
\]

Let $\hat{Q}_\sigma$ be the quiver given from the quiver $Q_\sigma$ by adding a vertex $\infty$ and an arrow from the vertex $\infty$ to the vertex $0$.
Note that $w_\sigma$ gives a potential for $\hat{Q}_\sigma$ as well.
The following lemma is a consequence of the results in \cite{3tcy}:
\begin{lem}\label{lem_small}
\[
\Omega_{\hat{Q}_\sigma,w_\sigma}^-(r,\alpha)=
\begin{cases}
1, & r=1,\ \alpha=\vec{0},\\
-L, & r=0,\ \alpha=n\cdot\delta\, (n>0),\\
(-1)^{\varepsilon(\alpha)}, & r=0,\ \alpha\in \Lambda_+^{\mr{re}},\\
0, & \text{otherwise}.
\end{cases}
\]
\end{lem}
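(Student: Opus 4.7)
My plan is to follow the same three-step pattern used for Lemma \ref{lem_X-} / Lemma \ref{lem_Q-} in \S\ref{sec_06}, adapted to the framed quiver $\hat{Q}_\sigma$: (i) classify $Z^-$-semistable $\hat{Q}_\sigma$-modules, (ii) separate the framing contribution at the vertex $\infty$, and (iii) invoke the explicit computation of BPS invariants for the unframed NCCR $(Q_\sigma,w_\sigma)$ from \cite{3tcy}.

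First, since the ``$-$'' stability condition is the one in which $\mr{arg}(Z^-(e_\infty))$ is strictly smaller than $\mr{arg}(Z^-(e_k))$ for every $k\in I$, the analogue of Lemma \ref{lem_12} holds: a $Z^-$-semistable $\C\hat{Q}_\sigma/\hat{I}_{w_\sigma}$-module $V$ either has $V_\infty=0$ (so $V$ is a module over $(Q_\sigma,w_\sigma)$) or is supported at $\infty$, in which case $V=s_\infty$. The isolated simple $s_\infty$ immediately contributes the entry $\Omega^-_{\hat{Q}_\sigma,w_\sigma}(1,\vec{0})=1$, since its Behrend weight is $(-1)^0=1$ and no other modules share its class.

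For the remaining case $V_\infty=0$, the computation reduces to the unframed quiver with potential $(Q_\sigma,w_\sigma)$ equipped with the stability pulled back from $Z^-$. Because the non-framing part of $Z^-$ assigns every simple $s_k$ ($k\in I$) the same phase $\mr{arg}(\theta^-_n)$, every $(Q_\sigma,w_\sigma)$-module is semistable, so we are computing the ``unrefined'' BPS counts of the NCCR. The main input is the author's previous result in \cite{3tcy}: the indecomposable $Z^-$-semistable objects of $(Q_\sigma,w_\sigma)$ are classified by positive roots of the affine root system of type $A_{L-1}$, with the real-root objects producing $(-1)^{\varepsilon(\alpha)}$ (the sign coming from the parity of loops, via Lemma \ref{lem_app} applied vertex by vertex), while the imaginary-root contribution at $n\delta$ assembles to $-L=-\chi(X)$, matching the Euler characteristic of the small toric crepant resolution. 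For $\alpha\notin \Lambda_+^{\mr{re}}\cup \Z_{>0}\delta$ the vanishing is immediate since there are no semistable objects of that class.

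The one step that requires the most care is the sign $(-1)^{\varepsilon(\alpha)}$ for real roots: one must check that exactly the vertices of $I\setminus I_\sigma$ contribute a sign, while the loop vertices of $I_\sigma$ behave like the one-loop quiver computation of Lemma \ref{lem_app}, and then verify that tensoring/extending along the affine Dynkin diagram preserves the parity count $\sum_{k\notin I_\sigma}\alpha_k$. This is precisely the book-keeping carried out in \cite{3tcy}, so the lemma follows by citing the relevant statements there and assembling them under the dichotomy $V_\infty=0$ vs.\ $V=s_\infty$ established above.
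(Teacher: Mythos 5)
Your overall strategy is the same as the paper's: the paper gives no argument for this lemma beyond the sentence ``the following lemma is a consequence of the results in \cite{3tcy}'', and your scaffolding --- the dichotomy ``$V_\infty=0$ or $V$ supported at $\infty$'' for the $-$-stability (the exact analogue of Lemma \ref{lem_12}), the resulting entry $\Omega^-(1,\vec{0})=1$, and the reduction of the $r=0$ cases to the unframed NCCR $(Q_\sigma,w_\sigma)$ computed in \cite{3tcy} --- is the natural way to organize that citation and is consistent with how Lemma \ref{lem_Q-} is deduced elsewhere in the paper.

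There is, however, one step that fails as stated: the claim that for $\alpha\notin\Lambda_+^{\mr{re}}\cup\Z_{>0}\delta$ ``the vanishing is immediate since there are no semistable objects of that class.'' Once $V_\infty=0$ the relevant stability is the degenerate one in which all simples $s_k$ ($k\in I$) have equal phase, so \emph{every} nonzero $(Q_\sigma,w_\sigma)$-module is semistable, and every dimension vector $\alpha\in\Z_{\geq 0}^I$ is realized (e.g.\ by direct sums of simples); hence $\bar{\mr{DT}}^-(0,\alpha)\neq 0$ for essentially all $\alpha$. What vanishes off the affine root system is the BPS invariant $\Omega$, and that is a genuinely nontrivial output of \cite{3tcy} (equivalently, of the product formula for the non-commutative DT partition function, or of the derived equivalence under which only root classes support curves) --- not an emptiness statement. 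Relatedly, the parenthetical deriving the sign $(-1)^{\varepsilon(\alpha)}$ ``vertex by vertex via Lemma \ref{lem_app}'' can at best handle the simple roots $\alpha=e_k$: a general real root $\alpha=e_i+\cdots+e_j$ has its unique stable module supported on a whole string of vertices, the loops at vertices of $I_\sigma$ are constrained by $w_\sigma$ rather than free as in $Q^\circ$, and the parity count $\sum_{k\notin I_\sigma}\alpha_k$ is exactly the content one must import from \cite{3tcy}. Since you ultimately defer both points to that reference, the proposal reaches the same place as the paper, but these two justifications should be removed or replaced by explicit citations rather than presented as ``immediate''.
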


\subsubsection{McKay quivers for $G\subset \mr{SL}_2\subset \mr{SL}_3$}
Take a finite subgroup $G\subset\mr{SL}_2\subset \mr{SL}_3$. 
Let $Q_G$ be the McKay quiver for $G\subset \mr{SL}_3$, which is given from the one for $G\subset \mr{SL}_2$ by adding a loop for each vertex.
\begin{figure}[htbp]
  \centering
  \input{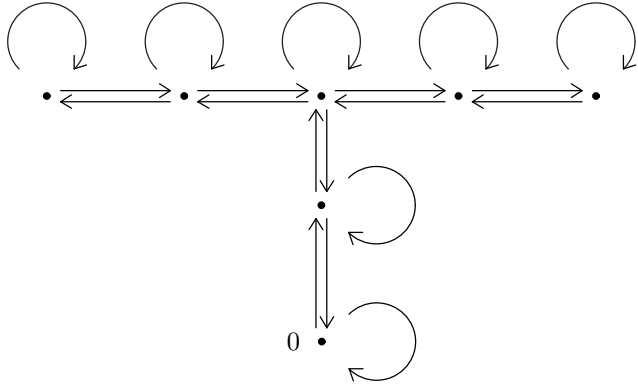}
  \caption{McKay quiver of type $E_6$}
\label{fig_6}
\end{figure}
Let $w_G$ be a potential which gives the derived McKay equivalence (\cite{ginzburg-cy}) and $0$ be the vertex which corresponds to the trivial representation.
Let $\Lambda_+^{\mr{re}}$ denote the set of positive real root and $\delta$ be the minimal imaginary root.

Let $I_G$ be the set of vertices of the quiver $Q_G$, and $\hat{Q}_G$ be the quiver given from the quiver $Q$ by adding a vertex $\infty$ and an arrow from the vertex $\infty$ to the vertex $0_G$.
See \cite{gholampour_jiang} for the following lemma:
\begin{lem}\label{lem_mckay}
\[
\Omega_{\hat{Q}_G,w_G}^-(r,\alpha)=
\begin{cases}
1, & r=1,\ \alpha=\vec{0},\\
-|I_G|, & r=0,\ \alpha=n\cdot \delta\, (n>0),\\
-1, & r=0,\ \alpha\in \Lambda_+^{\mr{re}},\\
0, & \text{otherwise}.
\end{cases}
\]
\end{lem}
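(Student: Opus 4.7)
The plan is to mirror the pattern of Lemmas~\ref{lem_12} and~\ref{lem_X-}: first cut down $Z^-$-semistable objects to a manageable list, then invoke a geometric model in which the remaining invariants are already known. By the argument of Lemma~\ref{lem_12}, the choice of $Z^-$ forces any $Z^-$-semistable $(\hat{Q}_G,w_G)$-module either to satisfy $V_\infty=0$ (hence to be a module over $(Q_G,w_G)$) or to coincide with the simple module $s_\infty$ at the framing vertex. The latter gives $\Omega^-(1,\vec{0})=1$ directly and rules out any contribution at rank $r\geq 2$ or at rank $1$ with $\alpha\neq\vec{0}$.

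For the remaining $r=0$ case, the induced stability on $(Q_G,w_G)$-modules places all non-framing simples at a single phase, so every module is semistable. I would then appeal to Ginzburg's derived McKay correspondence to identify $\mr{mod}(\C Q_G/I_{w_G})$ with the heart of a natural t-structure on $D^b(\mr{Coh}_c(Y))$, where $Y=\widetilde{\C^2/G}\times\C$ is the crepant resolution of $\C^3/G$. Under the induced identification $\Gamma_{Q_G}\simeq K(\mr{Coh}_c(Y))$, a positive real root $\alpha\in\Lambda_+^{\mr{re}}$ corresponds to the class of a sheaf supported on a single exceptional curve $\mathbb{P}^1\subset\widetilde{\C^2/G}$ (a $(-2)$-curve with normal bundle $\OO(-2)\oplus\OO$ in $Y$), while $\delta$ corresponds to the class of a point of $Y$.

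Once this dictionary is in place, the two non-vanishing cases fall out from known geometric computations. The $n\delta$ invariants are the zero-dimensional DT invariants of $Y$, whose generating series equals $M(-q)^{\chi(Y)}$ by \cite{behrend-fantechi,levine_pandharipande,li_0dimDT}; since $\chi(Y)=\chi(\widetilde{\C^2/G})=|I_G|$, extracting the BPS coefficient yields $\Omega^-(0,n\delta)=-|I_G|$ for all $n>0$. The positive-real-root contributions are local BPS invariants of a $(-2,0)$-curve in a Calabi-Yau threefold, computed to equal $-1$ in \cite{gholampour_jiang}; classes not of one of these two shapes admit no stable sheaves under the degenerate stability, so $\Omega^-$ vanishes there. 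The main obstacle is the careful bookkeeping of sign and normalization conventions between the quiver-theoretic BPS invariants $\Omega^-$ used here and the geometric ones in \cite{gholampour_jiang}, together with verifying that the McKay equivalence preserves the stability data; these are the points where one must lean most heavily on the cited references.
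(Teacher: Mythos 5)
The paper offers no argument for this lemma beyond the pointer ``See \cite{gholampour_jiang}'', and your proposal is essentially the intended route behind that citation (and the parallel Lemmas \ref{lem_small} and \ref{lem_(0,-2)}): use the stability condition to reduce to the unframed quiver exactly as in Lemma \ref{lem_12}, pass through the derived McKay correspondence to $Y=\widetilde{\C^2/G}\times\C$, and read off $-\chi(Y)=-|I_G|$ for the classes $n\delta$ and $-1$ for the real roots from the geometric computations in \cite{gholampour_jiang}. The one imprecision is that a positive real root corresponds to the unique rigid one-dimensional sheaf supported on a \emph{chain} of exceptional $(-2)$-curves rather than on a single $\mathbb{P}^1$ (this sheaf then sweeps out a $\C$-family in the fibre direction of $Y$, which is where the sign $-1$ comes from); this does not affect the conclusion.
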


\subsubsection{Obstructed $(0,-2)$-curve}
For an integer $N>1$, let ${Q}_{(0,-2);N}$ be the quiver given in Figure \ref{fig_5} and $w_{(0,-2);N}$ be the potentials given as follows:
\[
w_{(0,-2);N}=
\frac{c_0^{N+1}}{N+1}+
\frac{c_1^{N+1}}{N+1}+
c_0(b_1a_1+b_2a_2)+
c_1(a_1b_1+a_2b_2).
\]
\begin{figure}[htbp]
  \centering
  \input{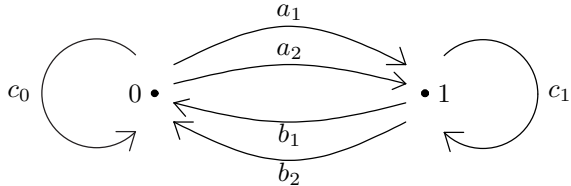}
  \caption{Non-commutative crepant resolution for obstructed $(0,-2)$-curve}
\label{fig_5}
\end{figure}

The the quiver with the potential $({Q}_{(0,-2);N},w_{(0,-2);N})$,  introduced in \cite{aspinwall_katz}, gives a non-commutative crepant resolution of the affine Calabi-Yau $3$-fold
\[
\{(X,Y,Z,U)\in \C^4\mid X^2+YZ-U^{2N}=0\}.
\]
The affine Calabi-Yau $3$-fold has an isolated singularity at the origin and the exceptional fiber of the crepant resolution of it is a $(0,-2)$ rational curve.

Let $\hat{Q}_{(0,-2);N}$ be the quiver given from the quiver ${Q}_{(0,-2);N}$ by adding a vertex $\infty$ and an arrow from the vertex $\infty$ to the vertex $0$.

We can classify stable modules in the same way as in \cite[\S 3.2]{nagao-nakajima}.
For $\alpha=(m,m+1)$ or $(m+1,m)$ ($m>0$), there exists a unique stable module, which gives a line bundle on the exceptional rational curve under the derived equivalence. 
Hence the category of semistable modules with dimension vector $\{n\cdot \alpha\mid n>0\}$ is equivalent to the category of semistable modules with dimension vector $\{(n,0)\mid n>0\}$.
The latter is equivalent to the category of $\C[u]/(u^{N})$-modules.
Since $\mr{Spec}(\C[u]/(u^{N}))$ is deformation equivalent to isolated $N$-points, we have the following (see \cite{chuang_pan} for arguments in the physics context):
\begin{NB}
Hence the moduli of stable modules with dimension vector $\alpha$ is 
\[
\mr{Spec}(\C[u]/(u^{N})),
\]
which is deformation equivalent to isolated $N$-points.
Then we can verify the following (see \cite{chuang_pan} for arguments in the physics context):
\end{NB}
\begin{lem}\label{lem_(0,-2)}
\[
\Omega_{\hat{Q}_{(0,-2);N},w_{(0,-2);N}}^-(r,\alpha)=
\begin{cases}
1, & r=1,\ \alpha=(0,0),\\
-2, & r=0,\ \alpha=(n,n)\, (n>0),\\
N, & r=0,\ \alpha=(n,n+1)\text{ or }(n+1,n)\, (n\geq 0),\\
0, & \text{otherwise}.
\end{cases}
\]
\end{lem}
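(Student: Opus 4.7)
The approach is to classify $Z^-_Q$-semistable modules by dimension vector and extract $\Omega^-$ case by case. For $r=1$, $\alpha=(0,0)$ the unique semistable object is the simple module at the vertex $\infty$, giving $\Omega^- = 1$ exactly as in Lemma \ref{lem_X-}. All remaining cases have $r=0$, so one can work inside $\mr{mod}(Q_{(0,-2);N}, w_{(0,-2);N})$ and ignore the vertex $\infty$.

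Following \cite[\S3.2]{nagao-nakajima}, the key geometric input is this: for a real root dimension vector $\alpha = (m,m+1)$ or $(m+1,m)$ there is a unique stable module $S_\alpha$, corresponding under the derived equivalence to a line bundle on the exceptional $(0,-2)$-curve $C \simeq \CP^1$. For the imaginary root $\alpha = (n,n)$ the indecomposable semistable objects form a $\CP^1$-family parametrized by points of $C$. I would then reduce, on each real-root ray, the full subcategory of semistable modules with dimension vectors $\{n\alpha \mid n > 0\}$ to a category equivalent to $\mr{mod}\,\C[u]/(u^N)$; here $u$ corresponds to the loop $c_0$ or $c_1$ acting through self-extensions of $S_\alpha$, and the exponent $N$ is forced by the relations $\partial w_{(0,-2);N}/\partial c_i = 0$ coming from the potential.

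Finally, I would invoke deformation invariance: $\mr{Spec}\,\C[u]/(u^N)$ flatly deforms to $N$ reduced points, so the moduli stack of $\C[u]/(u^N)$-modules deforms to a disjoint union of products of one-loop-quiver moduli. Lemma \ref{lem_app} then contributes the factor $N$ and yields $\Omega^- = N$ for each real root $\alpha$, as advertised. For the imaginary root $(n,n)$ the geometry reduces to the conifold analysis (the fattening plays no role in this direction since the underlying curve is still $\CP^1$), so the same two-parameter computation of Lemma \ref{lem_coni} delivers $\Omega^- = -2$ independently of $N$. Any other dimension vector supports no $Z^-_Q$-semistable module since it does not lie in the positive root cone of the affine $A_1$ quiver sitting inside ${Q}_{(0,-2);N}$, so $\Omega^- = 0$.

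The principal technical obstacle is the endomorphism algebra computation $\End(S_\alpha) \simeq \C[u]/(u^N)$. The lower bound on the nilpotency length (length $\geq N$) is visible from the geometric picture of the length-$N$ fattening of the exceptional curve, but the sharp upper bound requires a careful reading of the relations $c_i^N = -(b_1a_1 + b_2a_2)$ and $c_i^N = -(a_1b_1 + a_2b_2)$ coming from $\partial w_{(0,-2);N}/\partial c_i = 0$, combined with the vanishing of the relevant $a_i b_j$-products on $S_\alpha$ beyond the first order. Once this exponent is pinned down, the remaining steps are routine applications of deformation invariance of the Behrend-weighted Euler characteristic and of Lemma \ref{lem_app}.
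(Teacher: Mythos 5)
Your route coincides with the paper's: classify stable modules as in \cite[\S 3.2]{nagao-nakajima}; for each real root $(m,m+1)$ or $(m+1,m)$ there is a unique stable module, and the subcategory of semistables on that ray is identified with $\mr{mod}\,\C[u]/(u^N)$ (the paper gets this by identifying the ray with the semistables of dimension vector $(n,0)$, i.e.\ modules supported at the vertex $0$, where the relation $\partial w/\partial c_0=c_0^N+b_1a_1+b_2a_2$ forces $c_0^N=0$; your version via the obstructed self-extension of $S_\alpha$ is the same computation, though note $\End(S_\alpha)=\C$ for a stable module --- what you want is the prorepresenting algebra of its deformations); one then deforms $\mr{Spec}\,\C[u]/(u^N)$ to $N$ isolated points. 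The treatment of $r=1$, of the imaginary ray $(n,n)$ (giving $-2=-\chi(\CP^1)$ exactly as for the conifold), and of non-roots also matches the paper's intent.

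The one step that would fail as written is the last one. $\C[u]/(u^N)$ deforms to $\C^{\oplus N}$, whose module category is semisimple: $N$ reduced points with \emph{no} loops, each simple rigid and contributing $+1$, whence $\Omega^-=N$ at the primitive class and $0$ at its multiples. It does not deform to a disjoint union of products of one-loop-quiver moduli, and Lemma \ref{lem_app} is the wrong input: that lemma concerns the free one-loop quiver and gives $-1$ in dimension one, so invoking it here would produce $-N$ rather than $N$. The loop-versus-no-loop dichotomy is precisely what controls the sign throughout the paper (compare Lemma \ref{lem_Q-}, where the loop $b_{j,k}$ is present exactly when $\chi>0$ and flips the sign to $-1$), so this is not a cosmetic slip; the deformation argument must land on the loop-free side to yield the stated positive value $N$.
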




\bibliographystyle{amsalpha}
\bibliography{bib-ver5}

\end{document}